\newtheorem{theorem}{Theorem}[section]
\newtheorem{lemma}[theorem]{Lemma}
\newtheorem{definition}{Definition}[section]
\newtheorem{proposition}{Proposition}[section]
\newtheorem{remark}[theorem]{Remark}
\newtheorem{example}[theorem]{Example}
\numberwithin{equation}{section}
\title{Weak solvability of nonlinear elliptic equations involving variable exponents}
\author{Ahmed Aberqi$^{1}$, Jaouad Bennouna$^{2}$, Omar Benslimane$^{2}$,\\ Maria Alessandra Ragusa$^{3,4}$\\

$^{1}${\small Laboratory LAMA, Sidi Mohamed Ben Abdellah University, National School of Applied Sciences, Fez, Morocco.}\\

$^{2}${\small Laboratory LAMA, Department of Mathematics, Sidi Mohamed Ben Abdellah University, Faculty of Sciences Dhar El Mahraz, B.P 1796 Atlas Fez, Morocco.}\\

$^{3}${\small Dipartimento di Matematica e Informatica, Università di Catania, Catania, Italy.\protect\\ $^{4}$RUDN University, 6 Miklukho-Maklay St, 117198, Moscow, Russia.}
}
\begin{document}
\maketitle



%
%




\begin{abstract}
We are concerned with the study of the existence and multiplicity of solutions for Dirichlet boundary value problems, involving the $( p( m ), \, q( m ) )-$ equation and the nonlinearity is superlinear but does not fulfil the Ambrosetti-Rabinowitz condition in the framework of Sobolev spaces with variable exponents in a complete manifold. The main results are proved using the mountain pass theorem and Fountain theorem with Cerami sequences. Moreover, an example of a $( p( m ), \, q( m ) )$ equation that highlights the applicability of our theoretical results is also provided.
\end{abstract}

\bigskip
\bigskip
\noindent {\textit{Mathematics Subject Classification(2000):}\thinspace
\thinspace 35J60, 58J05.} \newline
Key words: Elliptic equation, non-trivial solutions, Cerami sequences, Sobolev-Orlicz Riemannian manifold with variable exponents.


\section[This is an A Head]{Introduction}
Let $ ( \mathrm{M}, g ) $ be a smooth complete compact Riemannian N-manifold. This paper is devoted to the existence and multiplicity of solutions to the following non-linear elliptic problem:
\begin{equation} \label{problem}
\begin{cases}
- \, \Delta_{p( m )} u( m ) - \Delta_{q( m )} u( m ) = \, h( m, u( m ) )  & \text{in \,\,$\mathrm{M}$ }, \\[0.3cm]
\, u \,  = \, 0  & \text{on \,$\partial \mathrm{M}$ },
\end{cases}
\end{equation}
where the variable exponents $p, \, q \in \overline{\mathrm{M}} \rightarrow ( 1, \, + \infty)$ are continuous functions, satisfy the following assumption:
\begin{equation}\label{0}
1 < q^{-} \leq q^{+} < p^{-} \leq p^{+} < \infty,
\end{equation}
with $$ q^{+} = \sup_{m \in \overline{\mathrm{M}}} q( m ), \,\, q^{-} = \inf_{m \in \overline{\mathrm{M}}} q( m ), \,\, p^{+} = \sup_{m \in \overline{\mathrm{M}}} p( m ), \,\, \mbox{and} \,\, p^{-} = \inf_{m \in \overline{\mathrm{M}}} p( m ).$$ The operators $\Delta_{p( m )} u( m ) = - \mbox{div} ( |\, \nabla u( m )\,|^{p( m ) - 2} \nabla u( m ) ) $ and $\Delta_{q( m )} u( m ) = - \mbox{div} ( |\, \nabla u( m )\,|^{q( m ) - 2} \nabla u( m ) ) $ are called the $p( m )$-Laplacian and $q( m )-$Laplacian in $( \mathrm{M}, \, g )$.\\
We set
$$\mathcal{H} = \{ \, G_{\lambda} ( m, s ) = h( m, s ) s - \lambda H( m, s ), \,\, \lambda \in [ 2q^{-}, \, 2p^{+} ] \, \},$$
where $H( m, s ) = \displaystyle\int_{0}^{s} f( m, t )\,\, dt$. \\
We emphasize that throughout this paper, we assume the following hypotheses on the Carathéodory function $ h: \mathrm{M} \times \mathbb{R} \rightarrow \mathbb{R}$
\begin{itemize}
\item[($h_{1})$:] $|\, h( m, s )\,| \leq c_{1} + c_{2} \,|\, s\,|^{r( m ) -1}$ for all $( m, s ) \in \mathrm{M} \times \mathbb{R},$  $r: \overline{\mathrm{M}} \rightarrow (1, \, +\infty )$ is a bounded continuous function with $ r( m ) < q^{*} ( m ) = \frac{N\, q( m )}{N - q( m )}$ for any $ m \in \overline{\mathrm{M}}$.
\item[($h_{2})$:] $\displaystyle\lim_{|\,s\,| \rightarrow \infty} \frac{h( m, s ) \, s}{|\, s\,|^{p^{+}}} = +\infty$ uniformly for a.e $ m \in \mathrm{M}$.
\item[($h_{3})$:] $ h( m, s ) = 0 ( |\, s\,|^{p^{+} - 1} )$ as $ s \rightarrow 0$ uniformly for $ m \in \mathrm{M} $.
\item[($h_{4})$:] There exists a constant $ \theta \geq 1 $ such that for any $ \beta \in [ 0, \, 1], \, s \in \mathbb{R}$ and for all $ G_{\lambda} \in \mathcal{H}, \, \eta \in [ 2q^{-}, \, 2p^{+} ]$ the inequality  $$ \theta \, G_{\lambda} ( m, s ) \geq G_{\eta} ( m, \beta s ) \,\, \mbox{holds for a.e} \,\, m \in \mathrm{M}$$
\item[($h_{5})$:] $ h( m, -s ) = - h( m, s ) $ for all $( m, s ) \in \mathrm{M} \times \mathbb{R}.$
\end{itemize}

In the literature, when $ p = \mbox{cste}$, several papers were done with authors in this case. We start by a pioneer work published by Liu and Li in \cite{liu2003infinitely} for multiplicity results with  superlinear nonlinearities, using the critical point theory with Cerami condition which is weaker than the Palais-Smale condition. For a deeper comprehension, we recommend that readers consult \cite{aberqi2019existence, aberqi2017nonlinear, azroul2021nonlocal, benslimane2020existence1, benslimane2020existence, benslimane2020existence2, benslimane2020existence3} and the references therein.\\
In Sobolev space with variable exponent, some other useful contributions have been devoted to the study but first of all the experts in the field immediately think, among all, to the study made by Colombo and Mingione in \cite{CM} and Baroni, Colombo, and Mingione \cite{BCM}.
The research in the field of differential equations and variational problems has been an attractive topic, which is motivated not only by the study of fluid filtration in porous media, constrained heating, elastoplasticity, optimal control, financial mathematics, and others, see \cite{chen2006variable, gwiazda2008non, ruuvzivcka2004modeling, zhikov2004density} and the references therein. But also by the mathematical importance in the theory of function spaces with variable exponents. For example in \cite{zhang2015existence} Zhang and Zhao proved the existence of strong solutions of the following $p( m )-$Laplacian Dirichlet problem via critical point theory:
$$\begin{cases}
- \, \Delta_{p(m)} v = \, h( m, v( m ) )  & \text{in \,\,$\Omega$ }, \\[0.3cm]
\, v \bigg|_{\partial \Omega}  = \, 0,
\end{cases} $$
and they give a new growth condition, under which, they used a new method to check the Cerami compactness condition. Hence, they proved the existence of strong solutions to the problem as above without the growth condition of the well-known Ambrosetti-Rabinowitz type and also they give some results about the multiplicity of the solutions. By  variational methods, the authors in \cite{hurtado2018existence} established the existence and multiplicity of non-trivial solutions for a general class of quasilinear problems involving $p( \cdot )-$Laplace type operators, without AR- condition,
$$\begin{cases}
- \, \mbox{div} (\mathrm{a} ( | \nabla v |^{p( m )} ) | \nabla v |^{p( m ) - 2} \nabla v = \, \alpha\, h( m, v( m ) )  & \text{in \,\,$\Omega$ }, \\[0.3cm]
\, v \bigg|_{\partial \Omega}  = \, 0,
\end{cases} $$
and using various types of versions of the Mountain Pass Theorem with Cerami sequences, and also, the Fountain and Dual Theorem with Cerami sequences, they obtained some existence of non-trivial solutions for the above problem under some considerations. Furthermore, they have shown that for every parameter $\alpha > 0,$ small enough, the problem treated has at least one non-trivial solution, and also that the solution blows up, in the Sobolev norm, as $ \alpha \rightarrow 0^{+}.$ Finally, utilizing Krasnoselskii's Genus Theory, they obtain the existence of infinitely many weak solutions by presenting appropriate assumptions on the nonlinearity $h( m, . ).$
 We recommend the reader to \cite{ragusa2019regularity, guo2015dirichlet, zang2008p} and the references therein for further information.\\

To our knowledge, the results presented here are new, and they complement and improve the ones obtained in \cite{zang2008p, zhang2015existence, hurtado2018existence} because we are considering the general framework of Sobolev spaces with variable exponents in Complete manifolds and nonlinearities is superlinear but does not fulfil the Ambrosetti-Rabinowitz condition. However, our main difficulty presented by the fact that the $p(m)-$ laplacian and $ q( m )-$ laplacian operators possess more complicated nonlinearities than the  $p$-laplacian and $q-$laplacian operator, on the basis that $\Delta_{p( m )}$ and $ \Delta_{q( m )}$ are not homogeneous. Furthermore, we are unable to apply the Lagrange Multiplier Theorem to a large number of problems using this operator, indicating that our problem is more complicated than the operators of the $p-$Laplace type. \\

This paper is designed as follows. In section \ref{sec2} we will recall the definitions and some properties of Sobolev spaces with variable exponents and Sobolev spaces with variable exponents in a complete manifold. The readers can consult the following papers \cite{aberqi2022fractional, aberqi2021existence, aubin1982nonlinear, benslimane2020existence1, benslimane2020existence, benslimane2020existence2, benslimane2020existence3, hebey2000nonlinear, trudinger1968remarks} for details. In section \ref{sec3},  we establish the existence of non-trivial weak solutions to problem \eqref{problem} applying the mountain pass theorem and Cerami sequences. Furthermore, using the Fountain theorem with Cerami sequences, we demonstrate that the problem \eqref{problem} has infinitely many (pairs) of solutions with unbounded energy.






\section[This is an A Head]{Preliminaries} \label{sec2}
In view of discussing problem \eqref{problem}, we will need some background about spaces $W_{0}^{1, q( m )} ( \Omega ) $ where $\Omega$ is an open subset of $ \mathbb{R}^{N}$ and $ W_{0}^{1, q( m )} ( \mathrm{M} )$ which are well-known as the Sobolev spaces with variable exponents and the Sobolev spaces with variable exponents in a complete manifold. For this reason, we will mention some properties involving the aforementioned spaces, which can be found in \cite{aberqi2021existence, aubin1982nonlinear, benslimane2020existence1, gaczkowski2016sobolev, hebey2000nonlinear, guo2015dirichlet} and the references therein.
\subsection{Sobolev spaces with variable exponents}
Let $ \Omega \subset \mathbb{R}^{N} \, ( N \geq 2 )$ be a bounded open subset. \\
The space $ L^{q(\cdot)} ( \Omega ) = \{ u: \Omega \longmapsto \mathbb{R} \, \, \mbox{measurable}: \, \rho _{q(\cdot)} ( u ) = \displaystyle\int_{\Omega} |\, u( m )\,|^{q( m )} \,\, dm < + \infty\},$ \\
endowed with the norm
$$ ||\, u \,||_{L^{q(\cdot)} ( \Omega )} = \inf \{ \, \alpha > 0: \, \rho_{q(\cdot)} \bigg( \frac{u}{\alpha} \bigg) \leq 1 \, \}, $$
for any $q^{+} = ess \,sup \{ \, q( m ) / m \in \Omega \, \} < + \infty.$ \\
And the Sobolev space
$$ W^{1, q( m )} ( \Omega ) = \{u: \, u \in L^{q( m )} ( \Omega ) \,\, \mbox{and} \,\, |\, \nabla u\,| \in L^{q( m )} ( \Omega ) \,\},$$
endowed  with the norm $$ ||\, u\,||_{W^{1, q( m )} ( \Omega )} = ||\, u\,||_{L^{q( m )} ( \Omega )} + ||\, \nabla u \||_{L^{q( m )} ( \Omega )} \,\, \forall u \in W^{1, q( m )} ( \Omega ).$$
We set $ W^{1, q( m )}_{0} ( \Omega ) = \overline{C_{0}^{\infty} ( \Omega )}^{W^{1, q( m )} ( \Omega )}.$
\subsection{Sobolev spaces with variable exponents in a complete manifold}
\begin{definition}
The Sobolev space $L^{q(\cdot)}_{k} (\mathrm{M})$ is defined as the completion of $C_{k}^{q(\cdot)} (\mathrm{M})$ with respect to the norm $|| u ||_{L^{q(\cdot)}_{k} (\mathrm{M})},$ where $$ C^{q(\cdot)}_{k} ( \mathrm{M} ) = \{ \, u \in C^{\infty} ( \mathrm{M} ) \,\, \mbox{such that } \,\, \forall j \,\, 0 \leq j \leq k \,\, | \, \nabla^{k} u \, | \in L^{q( \cdot ) } ( \mathrm{M} )\, \}, $$
and
$$ || \, u \, ||_{L^{q( \cdot )}_{k}} = \sum_{j = 0}^{k} || \, \nabla^{j} u \, ||_{L^{q( \cdot )}},$$
with $ |\nabla^{k} u| $ being the norm of the $k-$th covariant derivative of $u,$ defined in local coordinates by
$$ | \, \nabla^{k} u \, |^{2} = g^{i_{1} j_{1}} \cdots g^{i_{k} j_{k}} \, ( \nabla^{k} u )_{i_{1} \cdots i_{k}} \, ( \nabla^{k} u )_{j_{1} \cdots j_{k}}. $$
If $\Omega$ is a subset of $\mathrm{M},$ then $L_{k, 0}^{q( \cdot )} ( \Omega )$ is the completion of $C_{k}^{q( \cdot )} ( \mathrm{M} ) \cap C_{0} ( \Omega )$ with respect to $ || \cdot ||_{L^{q( \cdot )}_{k}},$ where $C_{0} ( \Omega )$ denotes the vector space of continuous functions whose support is a compact subset of $\Omega.$
\end{definition}
\begin{definition}
Given $ ( \mathrm{M}, g ) $ a smooth Riemannian manifold, and $ \xi : \, [\, a, \, b \, ] \longrightarrow \mathrm{M} $ a curve of class $ C^{1} $. The length of $ \xi $ is
$$ l( \xi ) = \int_{a}^{b} \bigg( g \, \bigg( \, \frac{d \xi }{d t }, \, \frac{d \xi}{d t}\, \bigg) \bigg)^{\frac{1}{2}} \,\,dt, $$
and for a pair of points $ m, \, \ell \in \mathrm{M}$, we define the distance $ d_{g} ( m, \ell ) $ between $m$ and $\ell$ by
$$ d_{g} ( m, \ell ) = \inf \, \{ \, l( \xi ) : \, \xi: \, [ \, a, \, b \,] \rightarrow M \,\, \mbox{such that} \,\, \xi ( a ) = m \,\, \mbox{and} \,\, \xi ( b ) = \ell \, \}. $$
\end{definition}
\begin{definition}
A function $ \mathcal{S}: \, \mathrm{M} \longrightarrow \mathbb{R} $ is log-Hölder continuous if there exists a constant $c$ such that for every pair of points $ \{ m, \, \ell \} $ in $ \mathrm{M}$ we have
$$ | \, \mathcal{S}( m ) - \mathcal{S}( \ell ) \, | \leq \frac{c}{\log ( e + \frac{1}{d_{g} ( m, \ell )} \, )}. $$
We note with $ \mathcal{P}^{\log} ( \mathrm{M} ) $ the set of log-Hölder continuous variable exponents. The relation between $ \mathcal{P}^{\log} ( \mathrm{M} ) $ and $ \mathcal{P}^{\log} ( \mathbb{R}^{N} ) $ is the following:
\end{definition}
\begin{proposition} (\cite{aubin1982nonlinear, gaczkowski2016sobolev})
Let $ q \in \mathcal{P}^{\log} ( \mathrm{M} ) $, and let $ ( \Omega, \phi ) $ be a chart such that
$$ \frac{1}{2} \delta_{i j } \leq g_{i j} \leq 2 \, \delta_{i j } $$
as bilinear forms, where $ \delta_{i j} $ is the Kronecker delta symbol. Then $ qo\phi^{-1} \in \mathcal{P}^{\log} ( \phi ( \Omega ) ).$
\end{proposition}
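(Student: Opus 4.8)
The plan is to transfer the log-Hölder estimate from the manifold to the chart by comparing the geodesic distance $d_g$ with the Euclidean distance in the coordinates given by $\phi$. Write $\tilde{q} = q \circ \phi^{-1}$, and for points $m, \ell \in \Omega$ set $x = \phi(m)$ and $y = \phi(\ell)$ in $\phi(\Omega) \subset \mathbb{R}^{N}$. Since $\tilde{q}(x) = q(m)$ and $\tilde{q}(y) = q(\ell)$, the quantity to be controlled is exactly $|\, q(m) - q(\ell)\,|$, so the whole matter reduces to showing that $\log(e + \frac{1}{|x-y|})$ is comparable to $\log(e + \frac{1}{d_g(m,\ell)})$, after which the log-Hölder bound for $q$ gives the one for $\tilde q$.

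First I would convert the bilinear-form bound into a pointwise comparison of norms. The hypothesis $\frac{1}{2}\delta_{ij} \leq g_{ij} \leq 2\delta_{ij}$ means that for every tangent vector $v = (v^{i})$ at a point of $\Omega$,
$$\tfrac{1}{2}\,|v|^{2} \leq g_{ij}\,v^{i} v^{j} \leq 2\,|v|^{2},$$
where $|v|$ is the Euclidean norm of the component vector; taking square roots gives $\frac{1}{\sqrt{2}}|v| \leq |v|_{g} \leq \sqrt{2}\,|v|$. Integrating this along any $C^{1}$ curve $\xi$ whose image lies in $\Omega$ and comparing with its coordinate representative $\phi \circ \xi$ yields the length comparison
$$\tfrac{1}{\sqrt{2}}\,l_{E}(\phi \circ \xi) \leq l(\xi) \leq \sqrt{2}\,l_{E}(\phi \circ \xi),$$
where $l_{E}$ denotes Euclidean length and $l$ is the Riemannian length from the second definition above.

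Next I would bound $d_g$ from above by the Euclidean distance. For $x$ and $y$ close enough that the Euclidean segment $[x,y]$ lies in $\phi(\Omega)$, the curve $\xi = \phi^{-1}([x,y])$ connects $m$ and $\ell$ inside $\Omega$, so by the definition of $d_g$ as an infimum of lengths together with the right-hand length estimate,
$$d_g(m,\ell) \leq l(\xi) \leq \sqrt{2}\,l_{E}([x,y]) = \sqrt{2}\,|x-y|.$$
Feeding $d_g(m,\ell) \leq \sqrt{2}\,|x-y|$ into the log-Hölder inequality for $q$ and using that $t \mapsto \log(e + \frac{1}{t})$ is decreasing gives
$$|\,\tilde{q}(x) - \tilde{q}(y)\,| = |\,q(m) - q(\ell)\,| \leq \frac{c}{\log\!\big(e + \frac{1}{d_g(m,\ell)}\big)} \leq \frac{c}{\log\!\big(e + \frac{1}{\sqrt{2}\,|x-y|}\big)}.$$
It then remains to absorb the factor $\sqrt{2}$: the elementary inequality $\log(e + \frac{1}{r}) \leq C\,\log(e + \frac{1}{\sqrt{2}\,r})$ holds with a uniform constant $C$ for all $r > 0$ (the ratio of the two logarithms is continuous and tends to $1$ as $r \to 0$ and as $r \to \infty$, hence is bounded), which converts the last bound into $|\,\tilde{q}(x) - \tilde{q}(y)\,| \leq \frac{\tilde{c}}{\log(e + \frac{1}{|x-y|})}$ and proves $\tilde{q} \in \mathcal{P}^{\log}(\phi(\Omega))$.

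The step I expect to be the main obstacle is the comparison of $d_g$ with $|x - y|$, because $d_g$ is a global quantity on $\mathrm{M}$ while the chart is only local: a length-minimizing curve between $m$ and $\ell$ need not remain inside $\Omega$. This is exactly why only the upper bound $d_g \leq \sqrt{2}|x-y|$ is needed here, since it relies on the concrete connector $\phi^{-1}([x,y])$ that stays in the chart, and why the comparison is run only for nearby points: the log-Hölder condition constrains the modulus of continuity only near the diagonal, whereas for points at distance bounded below the required inequality is automatic from the boundedness of the continuous exponent $q$ on the compact manifold $\overline{\mathrm{M}}$, after enlarging the constant. If one additionally knows that minimizing geodesics stay in the chart (for instance when $\phi(\Omega)$ is convex), then the matching lower bound $\frac{1}{\sqrt{2}}|x-y| \leq d_g(m,\ell)$ follows by the same length comparison applied to an arbitrary connecting curve, yielding a genuine two-sided equivalence of the two distances.
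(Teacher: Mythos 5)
The paper itself gives no proof of this proposition; it is quoted from \cite{aubin1982nonlinear, gaczkowski2016sobolev}, so there is nothing to compare against except the standard argument in those sources, which is exactly the one you run: convert the two-sided bound on $g_{ij}$ into $\tfrac{1}{\sqrt{2}}\,l_{E}(\phi\circ\xi)\le l(\xi)\le\sqrt{2}\,l_{E}(\phi\circ\xi)$, deduce $d_{g}(m,\ell)\le\sqrt{2}\,|x-y|$ from the segment $\phi^{-1}([x,y])$, and absorb the factor $\sqrt{2}$ into the log. All of those steps, including the monotonicity of $t\mapsto\log(e+\tfrac1t)$ and the boundedness of the ratio of the two logarithms, are correct.

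The one genuine gap is the reduction to ``nearby points.'' Your construction of the connecting curve requires the Euclidean segment $[x,y]$ to lie in $\phi(\Omega)$, and your proposed dichotomy (segment inside the image for close pairs, trivial bound via boundedness of $q$ for separated pairs) needs a \emph{uniform} threshold $\delta>0$ such that $|x-y|<\delta$ forces $[x,y]\subset\phi(\Omega)$. No such uniform $\delta$ exists for a general open image: the admissible gap degenerates as $x,y$ approach $\partial(\phi(\Omega))$. This is not a cosmetic issue, because without a convexity-type hypothesis the statement can actually fail --- take $\phi(\Omega)$ shaped like a thin horseshoe and two points at the tips: $|x-y|$ is small while every curve joining $m$ and $\ell$ (in $\Omega$ or in $\mathrm{M}$) may have length bounded below, so $q$ may oscillate by a fixed amount there consistently with the log-H\"older condition on $\mathrm{M}$, while $q\circ\phi^{-1}$ violates it on $\phi(\Omega)$. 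The proposition is really meant for charts whose image is a ball (as in \cite{gaczkowski2016sobolev}), in which case $[x,y]\subset\phi(\Omega)$ always and your argument closes with no dichotomy at all; you should either invoke that convexity explicitly as a hypothesis or restrict to such charts rather than present the near/far split as if it worked for an arbitrary chart. Your closing remark about the reverse inequality $\tfrac{1}{\sqrt{2}}|x-y|\le d_{g}(m,\ell)$ is a separate matter and is correctly flagged as not needed here.
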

\begin{definition}
We say that the N-manifold $ ( \mathrm{M}, g ) $ has property $ B_{vol} ( \lambda, v ) $ if its geometry is bounded in the following sense:\\
$ \hspace*{1cm} \bullet \,\, \mbox{The Ricci tensor of g noted by Rc ( g ) verify,} \,\,Rc ( g ) \geq \lambda ( N - 1 ) \, g $ for some $ \lambda. $\\
$ \hspace*{1cm} \bullet  $ There exists some $ v > 0 $ such that $ | \, B_{1} ( m ) \, |_{g} \geq v \,\, \forall m \in \mathrm{M},$ where $B_{1} ( m ) $ are the balls of radius 1 centred at some point $m$ in terms of the volume of smaller concentric balls.
\end{definition}
\label{prop5}
\begin{proposition} (\cite{aubin1982nonlinear,hebey2000nonlinear})\label{prop3}
Assume that the complete n-manifold $ ( \mathrm{M}, g ) $ has property $ B_{vol} ( \lambda, v ) $ for some $ ( \lambda, v ).$ Then there exist positive constants $ \delta_{0} = \delta_{0} ( N, \, \lambda, \, v ) $ and $ A = A ( N, \, \lambda, \, v ) $, we have, if $ R \leq \delta_{0} $, if $ m \in \mathrm{M} $ if $ 1 \leq q \leq N $, and if $ u \in L^{q}_{1,0} ( \, B_{R} ( m ) \, ) $ the estimate
$$ || \, u \, ||_{L^{p}} \leq A \,p \, || \, \nabla u \, ||_{L^{q}},$$ where $ \frac{1}{p} = \frac{1}{q} - \frac{1}{N}.$
\end{proposition}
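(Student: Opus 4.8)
The plan is to reduce the Riemannian inequality to the classical Euclidean Sobolev inequality on a single, basepoint-independent scale, and to track the constant carefully so that the linear factor $p$ survives. First I would fix the radius: property $B_{vol}(\lambda,v)$ supplies the lower Ricci bound $Rc(g)\ge\lambda(N-1)g$ together with the volume non-collapsing bound $|B_1(m)|_g\ge v$, and via Bishop--Gromov volume comparison these yield a uniform lower bound on the harmonic (equivalently geodesic-normal) radius. Consequently there is $\delta_0=\delta_0(N,\lambda,v)>0$ such that for every $m\in\mathrm{M}$ and every $R\le\delta_0$ the ball $B_R(m)$ lies inside a chart $(\Omega,\phi)$ of the type furnished by the preceding Proposition, i.e.\ one in which $\tfrac12\delta_{ij}\le g_{ij}\le 2\delta_{ij}$ as bilinear forms. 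The role of the bounded-geometry hypothesis is precisely to make this radius uniform in the basepoint $m$.

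Next I would convert Riemannian norms into Euclidean ones on such a chart. The two-sided comparison $\tfrac12\delta_{ij}\le g_{ij}\le 2\delta_{ij}$ forces the volume element $\sqrt{\det g}\,dx$ and the Lebesgue measure $dx$ to agree up to multiplicative constants depending only on $N$, and likewise forces $|\nabla u|_g$ and the flat gradient $|\nabla(u\circ\phi^{-1})|$ to be comparable up to $N$-dependent constants. Hence, for $u\in L^{q}_{1,0}(B_R(m))$, both $||\,u\,||_{L^p}$ and $||\,\nabla u\,||_{L^q}$ differ from the corresponding Euclidean norms of $v:=u\circ\phi^{-1}$ only by factors $c(N)$.

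The analytic core is the Euclidean estimate $||\,v\,||_{L^p(\mathbb{R}^N)}\le C(N)\,p\,||\,\nabla v\,||_{L^q(\mathbb{R}^N)}$ for $v\in C_0^\infty$, with $\tfrac1p=\tfrac1q-\tfrac1N$ and $1\le q<N$, whose constant is linear in $p$. I would derive it from the case $q=1$, namely the isoperimetric/Gagliardo--Nirenberg inequality $||\,v\,||_{L^{N/(N-1)}}\le C(N)\,||\,\nabla v\,||_{L^1}$, by applying it to $|v|^{s}$ with $s=\frac{q(N-1)}{N-q}$ and invoking Hölder's inequality with conjugate exponents $q$ and $\frac{q}{q-1}$. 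A short bookkeeping of the exponents shows that the powers of $||\,v\,||_{L^p}$ on the two sides collapse, leaving exactly the prefactor $s=\frac{N-1}{N}\,p\le p$, which is the origin of the claimed factor $p$. Pulling $u$ back through $\phi$, applying this inequality to $v$, and undoing the norm comparisons of the previous step then gives $||\,u\,||_{L^p}\le A\,p\,||\,\nabla u\,||_{L^q}$ with $A=A(N,\lambda,v)$ absorbing all the $N$-dependent comparison constants.

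The main obstacle is the first step: extracting one uniform $\delta_0$ on which the metric is two-sided comparable to the flat one for \emph{all} basepoints simultaneously. This is the genuinely geometric input, resting on the lower Ricci bound and the volume non-collapsing condition through comparison geometry; once it is secured (with the preceding Proposition providing the chart-level metric comparison), the remaining steps are a routine change of variables together with the classical Sobolev estimate. The endpoint $q=N$ is degenerate, since then $p=\infty$, and is to be read in the limiting sense consistent with the blow-up of $p$.
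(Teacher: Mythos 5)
The paper gives no proof of this proposition: it is quoted verbatim from the cited references of Aubin and Hebey (it is essentially Lemma 3.2 of Hebey's book), so your attempt can only be judged against the standard argument there. The analytic half of your sketch is correct and well aimed: deriving the $L^{q}\to L^{p}$ estimate from the $q=1$ case $\|v\|_{L^{N/(N-1)}}\le C(N)\|\nabla v\|_{L^{1}}$ by applying it to $|v|^{s}$ with $s=\frac{q(N-1)}{N-q}=\frac{N-1}{N}\,p$ and invoking H\"older is exactly where the linear factor $p$ in the constant comes from, and your exponent bookkeeping is right.

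The gap is in the geometric half, at precisely the step you yourself flag as the main obstacle. Property $B_{vol}(\lambda,v)$ supplies only a lower Ricci bound and volume non-collapsing, and Bishop--Gromov comparison does \emph{not} upgrade this to a uniform radius $\delta_{0}$ on which every ball sits inside a chart with $\tfrac12\delta_{ij}\le g_{ij}\le 2\delta_{ij}$. Harmonic-radius lower bounds of Anderson--Cheeger type require strictly stronger input (a lower injectivity radius bound, or two-sided curvature control). In fact the claim fails under the stated hypotheses: smoothings of a flat two-dimensional cone of small angle $\theta$, multiplied by a flat torus, have $Rc\ge 0$ and uniformly non-collapsed unit balls, yet near the smoothed tip the volume of $B_{r}$ is roughly $\frac{\theta}{2\pi}$ times the Euclidean value, which for $\theta$ small enough is incompatible with any single chart satisfying the two-sided metric bound. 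The proof in the cited sources avoids charts altogether: one combines Croke's isoperimetric estimate, which bounds $|\partial\Omega|\,/\,|\Omega|^{(N-1)/N}$ from below by a power of the visibility angle, with Bishop--Gromov, which controls that angle on balls of radius $R\le\delta_{0}$ using exactly $Rc\ge\lambda(N-1)g$ and $|B_{1}(m)|_{g}\ge v$; the co-area formula then yields the $q=1$ Sobolev inequality with a uniform constant, after which your $|v|^{s}$ argument finishes the proof. So the second and third stages of your plan can stand, but the chart-based reduction must be replaced by this isoperimetric route.
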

\begin{proposition} (\cite{aubin1982nonlinear,hebey2000nonlinear, gaczkowski2016sobolev}) \label{prop6}
Suppose that for some $ ( \lambda, v ) $ the complete n-manifold $( \mathrm{M}, g ) $ has property $ B_{vol} ( \lambda, v ) $. Let $ p \in \mathcal{P} ( \mathrm{M} ) $ be uniformly continuous with $ q^{+} < N.$ Then $ L^{q( \cdot )}_{1} ( \mathrm{M} ) \hookrightarrow L^{p( \cdot )} ( \mathrm{M} ) \,\, \forall q \in \mathcal{P} ( \mathrm{M} ) $ such that $ q \ll p \ll q* = \frac{N q}{N - q}. $ Indeed, for $ || \, u \, ||_{L^{q( \cdot )}_{1}} $ sufficiently small, we have $$ \rho_{p( \cdot )} ( u ) \leq G \, ( \, \rho_{q( \cdot )} ( u ) + \rho_{q( \cdot )} ( | \, \nabla u \, | ) \, ), $$ with $G$ is a positive constant depend on $ n, \, \lambda, \, v, \, q $ and $ p $.
\end{proposition}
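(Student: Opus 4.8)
The plan is to reduce the global statement on $\mathrm{M}$ to the local, constant-exponent Sobolev estimate of Proposition \ref{prop3} by means of a covering adapted to the bounded geometry, and then to reassemble the local pieces with the help of the modular inequalities for variable exponents.

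First I would exploit the property $B_{vol}(\lambda, v)$ to produce a \emph{uniformly locally finite} covering of $\mathrm{M}$ by balls $\{B_R(m_i)\}_{i}$ with $R \leq \delta_0$, where $\delta_0 = \delta_0(N, \lambda, v)$ is the radius furnished by Proposition \ref{prop3}; the lower Ricci bound together with the volume bound guarantees that the multiplicity of such a covering is bounded by a constant $N_0 = N_0(N, \lambda, v)$ independent of $i$, and that one may attach a subordinate partition of unity $\{\phi_i\}$ with $\sum_i \phi_i = 1$ and $\|\nabla \phi_i\|_\infty \leq C(R)$. Because $p$ and $q$ are uniformly continuous, after shrinking $R$ if necessary I may freeze the exponents on each ball: writing $q_i^- = \inf_{B_R(m_i)} q$ and $p_i^+ = \sup_{B_R(m_i)} p$, the hypothesis $p \ll q^*$ provides a gap that survives the (arbitrarily small) oscillation of the exponents, so that $p_i^+ < (q_i^-)^* = \tfrac{N q_i^-}{N - q_i^-}$ on every ball.

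Next, on each ball I would apply Proposition \ref{prop3} to the compactly supported function $\phi_i u \in L^{q_i^-}_{1,0}(B_R(m_i))$, obtaining $\|\phi_i u\|_{L^{(q_i^-)^*}} \leq A\,(q_i^-)^*\,\|\nabla(\phi_i u)\|_{L^{q_i^-}}$, and then controlling $\nabla(\phi_i u) = \phi_i \nabla u + u \nabla \phi_i$ by the product rule. Since each $B_R(m_i)$ has finite volume and $p_i^+ < (q_i^-)^*$, a Hölder inequality on the ball upgrades this into a bound for $\|\phi_i u\|_{L^{p(\cdot)}(B_R(m_i))}$ in terms of $\|\phi_i u\|_{L^{q(\cdot)}(B_R(m_i))}$ and $\|\phi_i \nabla u\|_{L^{q(\cdot)}(B_R(m_i))}$; here the passage between the constant-exponent Lebesgue norms and the variable-exponent modulars is carried out through the standard inequalities relating $\rho_{s(\cdot)}$ to $\|\cdot\|_{L^{s(\cdot)}}$ (namely $\rho \le \max(\|\cdot\|^{s^-}, \|\cdot\|^{s^+})$ and its companion lower bound), which is precisely where the smallness of $\|u\|_{L^{q(\cdot)}_1}$ is used to keep the modular on the correct side.

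Finally I would sum the local estimates over $i$. Using the finite multiplicity $N_0$ of the covering together with the sub/super-additivity of the modular over the balls, the individual contributions reassemble into the global modular bound $\rho_{p(\cdot)}(u) \leq G\big(\rho_{q(\cdot)}(u) + \rho_{q(\cdot)}(|\nabla u|)\big)$, with $G$ depending only on $N, \lambda, v$ and the exponents $q, p$, which in turn yields the claimed continuous embedding $L^{q(\cdot)}_1(\mathrm{M}) \hookrightarrow L^{p(\cdot)}(\mathrm{M})$. I expect the main obstacle to be the bookkeeping in this last reassembly: one must ensure that freezing the exponents, the Hölder step on each ball, and the covering multiplicity all produce constants that are uniform in $i$, so that the summed constant $G$ stays finite. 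This is exactly the point at which the uniform continuity of the exponents and the uniform bounded geometry $B_{vol}(\lambda, v)$ are indispensable, and it is also where the restriction to $\|u\|_{L^{q(\cdot)}_1}$ small cannot be dropped.
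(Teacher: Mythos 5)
The paper does not prove this proposition: it is imported verbatim from the cited references (in particular Gaczkowski--G\'orka--Pons, \emph{J. Funct. Anal.} 270 (2016)), and your outline --- a uniformly locally finite cover of balls of radius $R\le\delta_0$ guaranteed by $B_{vol}(\lambda,v)$, a subordinate partition of unity, freezing the exponents on each ball via uniform continuity so that $p_i^+<(q_i^-)^*$, applying the constant-exponent estimate of Proposition \ref{prop3} to $\phi_i u$, and reassembling the local modular bounds using the bounded multiplicity and the smallness of $\|u\|_{L^{q(\cdot)}_1}$ to keep the norm--modular comparisons pointing the right way --- is essentially that reference's argument. The plan is sound as an outline of the standard proof.
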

\begin{definition} (\cite{guo2015dirichlet})
The Sobolev space $ W^{1, q( m )} ( \mathrm{M} )$ consists of such functions $ u \in L^{q( m )} ( \mathrm{M} )$ for which $ \nabla^{k} u \in L^{q( m )} ( \mathrm{M} )$ $k = 1, 2, \cdots, n.$ The norm is defined by
$$ ||\, u\,||_{W^{1, q( m )} ( \mathrm{M} )} = ||\, u\,||_{L^{q( m )} ( \mathrm{M} )} + \sum_{k = 1}^{n} ||\, \nabla^{k} u \,||_{L^{q( m )} ( \mathrm{M} )}.$$
The space $ W_{0}^{1, q( m )} ( \mathrm{M} )$ is defined as the closure of $ C^{\infty} ( \mathrm{M} ) $ in $ W^{1, q( m )} ( \mathrm{M} ).$
\end{definition}
\begin{theorem}\label{theo1} (\cite{guo2015dirichlet})
Let $\mathrm{M}$ be a compact Riemannian manifold with a smooth boundary or without boundary and $ q( m ), \, p( m ) \in C( \overline{\mathrm{M}} ) \cap L^{\infty} ( \mathrm{M} ).$ Assume that $$ q( m ) < N , \hspace*{0.5cm} p( m ) < \frac{N\, q( m )}{N - q( m )} \,\, \mbox{for} \,\, m \in \overline{\mathrm{M}}.$$
Then, $$ W^{1, q( m )} ( \mathrm{M} ) \hookrightarrow L^{p( m )} ( \mathrm{M} ),$$
is a continuous and compact embedding.
\end{theorem}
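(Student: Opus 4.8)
The plan is to reduce the statement to the already-understood Euclidean theory of variable exponent Sobolev spaces (Section~2.1) by localizing on $\mathrm{M}$, and then to patch the local estimates together using the compactness of $\overline{\mathrm{M}}$. Concretely, since $\overline{\mathrm{M}}$ is compact I would first cover it by finitely many coordinate charts $(\Omega_i,\phi_i)_{i=1}^{k}$ which can be shrunk so that in each one the metric is two-sided comparable to the Euclidean one, $\tfrac{1}{2}\delta_{ab}\le g_{ab}\le 2\,\delta_{ab}$ as bilinear forms, exactly as in the proposition relating $\mathcal{P}^{\log}(\mathrm{M})$ and $\mathcal{P}^{\log}(\mathbb{R}^N)$. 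I would then fix a smooth partition of unity $\{\alpha_i\}_{i=1}^k$ subordinate to this cover, so that every $u\in W^{1,q(m)}(\mathrm{M})$ decomposes as $u=\sum_{i=1}^k \alpha_i u$ with each $\alpha_i u$ compactly supported in $\Omega_i$.

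The second step is the transfer of norms. For a function supported in a single chart, the comparability of $g$ with $\delta$ forces the volume element $\sqrt{\det g}$ and the pointwise gradient norm $|\nabla u|_g$ to be bounded above and below by their Euclidean counterparts of the pushforward $u\circ\phi_i^{-1}$ on $\phi_i(\Omega_i)\subset\mathbb{R}^N$. Consequently the modular $\rho_{q(\cdot)}(\alpha_i u)+\rho_{q(\cdot)}(|\nabla(\alpha_i u)|)$ computed on $\mathrm{M}$ is comparable to the Euclidean modular of the pushforward, and via the unit-ball (norm–modular) relation this yields two-sided equivalence of the Sobolev norms on $\mathrm{M}$ and on $\phi_i(\Omega_i)$, with constants depending only on the metric bounds and on $q^{\pm}$. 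Here the pulled-back exponents $q\circ\phi_i^{-1}$ and $p\circ\phi_i^{-1}$ remain continuous, bounded, and satisfy the same pointwise relation $p<q^{*}=\tfrac{Nq}{N-q}$; the cited chart proposition guarantees that the required log-Hölder regularity is preserved under $\phi_i$.

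With the localization in place, continuity is immediate: I would apply the Euclidean variable-exponent Sobolev embedding $W^{1,q(\cdot)}(\phi_i(\Omega_i))\hookrightarrow L^{p(\cdot)}(\phi_i(\Omega_i))$ to each pushed-forward piece, pull back, and sum over the finite index set, so that $\|u\|_{L^{p(m)}(\mathrm{M})}\le\sum_i\|\alpha_i u\|_{L^{p(m)}(\mathrm{M})}\le C\,\|u\|_{W^{1,q(m)}(\mathrm{M})}$. For compactness, given a bounded sequence $(u_n)$ in $W^{1,q(m)}(\mathrm{M})$, each localized sequence $(\alpha_i u_n)_n$ is bounded in $W^{1,q(\cdot)}$ on $\phi_i(\Omega_i)$; since the inequality $p(m)<q^{*}(m)$ is \emph{strict} and $\overline{\mathrm{M}}$ is compact, continuity gives a uniform gap $p(m)\le q^{*}(m)-\varepsilon$, which is precisely the condition making the Euclidean embedding into $L^{p(\cdot)}$ compact. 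Extracting a convergent subsequence for $i=1$, then a further subsequence for $i=2$, and so on through the finitely many charts, I obtain a single subsequence along which every piece converges in $L^{p(\cdot)}$; pulling back and summing shows $(u_n)$ converges in $L^{p(m)}(\mathrm{M})$.

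The hard part will be the norm transfer in the second step rather than the patching: one must verify that the equivalence constants between the manifold and Euclidean modulars are \emph{uniform} over the finite atlas, and that the product rule $\nabla(\alpha_i u)=\alpha_i\nabla u+u\,\nabla\alpha_i$ does not spoil the estimate — the extra zeroth-order term is controlled because $\alpha_i$ is smooth with bounded derivatives and $u\in L^{q(\cdot)}\subset L^{p(\cdot)}$ is already dominated by the full $W^{1,q(m)}$ norm. Ensuring that the strictness of $p<q^{*}$ survives as a uniform gap after pullback, so that the Euclidean \emph{compact} embedding (and not merely the continuous one) may be invoked on each chart, is the key technical point; once that is secured, the finite diagonal extraction is routine.
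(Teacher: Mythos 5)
Your proposal is essentially the standard (and correct) proof of this statement: the paper itself offers no argument, importing the theorem verbatim from \cite{guo2015dirichlet}, and the proof there proceeds exactly by your route — a finite atlas with two-sided metric bounds, a subordinate partition of unity, transfer of modulars to the Euclidean setting, and the subcritical compact embedding on each chart followed by finite diagonal extraction. Your attention to the product-rule term $u\,\nabla\alpha_i$ and to the uniform gap $p(m)\le q^{*}(m)-\varepsilon$ (which is what licenses the \emph{compact} Euclidean embedding) covers the only genuinely delicate points.
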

\begin{theorem}\label{theo2}
Let $\mathrm{M}$ be a compact Riemannian manifold with a smooth boundary or without boundary and $ q( m ), \, r( m ) \in C( \overline{\mathrm{M}} ) \cap L^{\infty} ( \mathrm{M} ).$ Assume that $$ q( m ) < N , \hspace*{0.5cm} r( m ) < \frac{N\, q( m )}{N - q( m )} \,\, \mbox{for} \,\, m \in \overline{\mathrm{M}}.$$
Then, $$ W^{1, q( m )} ( \mathrm{M} ) \hookrightarrow L^{r( m )} ( \mathrm{M} ),$$
is a continuous and compact embedding.
\end{theorem}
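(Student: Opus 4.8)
The plan is to recognize that Theorem \ref{theo2} is, up to renaming the target exponent, literally Theorem \ref{theo1}. Indeed, the hypotheses imposed on $r$ are word for word those imposed on $p$ in Theorem \ref{theo1}: namely $r \in C(\overline{\mathrm{M}}) \cap L^{\infty}(\mathrm{M})$ together with $q(m) < N$ and $r(m) < \frac{N\,q(m)}{N-q(m)} = q^{*}(m)$ for every $m \in \overline{\mathrm{M}}$, and the claimed conclusion has the same form. Hence the shortest route is to apply Theorem \ref{theo1} with $p$ replaced by $r$, which immediately yields that $W^{1,q(m)}(\mathrm{M}) \hookrightarrow L^{r(m)}(\mathrm{M})$ is both continuous and compact.

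If instead a self-contained argument is preferred, I would split the statement into its two parts. For the continuous embedding, I would first exploit the compactness of $\overline{\mathrm{M}}$: since $q$ and $r$ are continuous and the strict inequality $r(m) < q^{*}(m)$ holds at every point, the gap $\inf_{m \in \overline{\mathrm{M}}}\bigl(q^{*}(m) - r(m)\bigr)$ is strictly positive, so $r \ll q^{*}$ in the sense required by Proposition \ref{prop6} (in the range where $r \geq q$; where $r < q$ the embedding is automatic because $\mathrm{M}$ has finite volume). Proposition \ref{prop6} then supplies the modular estimate $\rho_{r(\cdot)}(u) \leq G\,\bigl(\rho_{q(\cdot)}(u) + \rho_{q(\cdot)}(|\nabla u|)\bigr)$ for $u$ of small norm, from which the norm inequality, and hence continuity of the embedding, follows by the standard passage between the modular and the Luxemburg norm.

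For compactness, I would localize. Cover $\mathrm{M}$ by finitely many charts $(\Omega_i,\phi_i)$ satisfying the bilinear bound $\tfrac12\delta_{ij} \leq g_{ij} \leq 2\delta_{ij}$, so that by the log-Hölder transfer proposition (relating $\mathcal{P}^{\log}(\mathrm{M})$ and $\mathcal{P}^{\log}(\mathbb{R}^{N})$) the exponents $q,r$ push forward to log-Hölder exponents on each $\phi_i(\Omega_i) \subset \mathbb{R}^{N}$; subordinate a smooth partition of unity and transport a bounded sequence of $W^{1,q(m)}(\mathrm{M})$ to bounded sequences of $W^{1,q(\cdot)}_{0}$ on the Euclidean pieces. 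On each piece the Euclidean Rellich–Kondrachov theorem for variable exponents gives precompactness in $L^{r(\cdot)}$; reassembling the finitely many pieces produces a subsequence converging in $L^{r(m)}(\mathrm{M})$. An alternative to the patching is to interpolate: the compact embedding into $L^{q(\cdot)}(\mathrm{M})$ (valid since $q < q^{*}$) gives strong $L^{q(\cdot)}$-convergence, which combined with the uniform bound in $L^{q^{*}(\cdot)}$ and the positive gap $q^{*} - r \geq \varepsilon > 0$ upgrades to strong convergence in the strictly subcritical space $L^{r(\cdot)}(\mathrm{M})$.

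The main obstacle is this second stage: making the localization rigorous demands careful control of the variable-exponent Luxemburg norms under the chart diffeomorphisms and under multiplication by the partition-of-unity functions, and it requires that the strict subcritical gap be preserved uniformly across the finite atlas. Once the uniform positive gap $q^{*}-r \geq \varepsilon > 0$ is secured, the compactness is essentially forced; it is the variable-exponent bookkeeping, rather than any deep analytic difficulty, that requires the most care.
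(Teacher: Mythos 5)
Your first paragraph is exactly the paper's own proof: the paper disposes of Theorem \ref{theo2} by stating that its demonstration is the same as that of Theorem \ref{theo1}, i.e.\ one simply applies Theorem \ref{theo1} with $p$ renamed to $r$. The additional self-contained localization/interpolation sketch you offer goes beyond what the paper records, but your primary route coincides with the paper's argument.
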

\begin{proof}
The demonstration of this theorem is the same as the previous one.
\end{proof}
\begin{proposition} (\cite{aubin1982nonlinear})
If $( \mathrm{M}, g )$ is complete, then  $W^{1, q( m )} ( \mathrm{M} ) = W^{1, q( m )}_{0} ( \mathrm{M} ).$
\end{proposition}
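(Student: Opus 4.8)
The inclusion $W^{1,q(m)}_0(\mathrm{M}) \subseteq W^{1,q(m)}(\mathrm{M})$ is immediate from the definition of $W^{1,q(m)}_0$ as a closure inside $W^{1,q(m)}$, so the entire content lies in the reverse inclusion. This amounts to showing that compactly supported functions, and then \emph{smooth} compactly supported functions, are dense in $W^{1,q(m)}(\mathrm{M})$. The plan is to use completeness through an exhaustion of $\mathrm{M}$ by compact metric balls together with a sequence of globally defined cut-off functions whose gradients decay.

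First I would fix a base point $x_0 \in \mathrm{M}$ and consider the distance function $r(x) = d_g(x_0, x)$. By completeness and the Hopf--Rinow theorem, every closed ball $\overline{B_R(x_0)}$ is compact, so $\mathrm{M} = \bigcup_{j} B_j(x_0)$ is an exhaustion by relatively compact sets; moreover $r$ is $1$-Lipschitz, hence $|\nabla r| \leq 1$ almost everywhere. Choosing $\chi \in C^{\infty}(\mathbb{R})$ with $\chi \equiv 1$ on $(-\infty,1]$, $\chi \equiv 0$ on $[2,\infty)$ and $|\chi'| \leq 2$, I set $\eta_j = \chi(r/j)$, so that $0 \leq \eta_j \leq 1$, $\eta_j \equiv 1$ on $B_j(x_0)$, $\operatorname{supp}\eta_j \subset \overline{B_{2j}(x_0)}$ is compact, and $|\nabla \eta_j| \leq 2/j$ almost everywhere.

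Next, for $u \in W^{1,q(m)}(\mathrm{M})$ I would put $u_j = \eta_j u$, which is compactly supported and lies in $W^{1,q(m)}(\mathrm{M})$, and show $u_j \to u$ in the Sobolev norm. For the zero-order term, $|u - u_j| = (1-\eta_j)|u| \to 0$ pointwise and is dominated by $|u| \in L^{q(m)}(\mathrm{M})$; since the exponent is bounded, dominated convergence gives $\rho_{q(\cdot)}(u - u_j) \to 0$, hence $\|u - u_j\|_{L^{q(m)}} \to 0$. For the gradient, the product rule gives $\nabla u_j - \nabla u = (\eta_j - 1)\nabla u + u\,\nabla \eta_j$; the first term tends to $0$ in $L^{q(m)}(\mathrm{M})$ by the same argument, while the second satisfies $|u\,\nabla \eta_j| \leq (2/j)\,|u|\,\mathbf{1}_{B_{2j}\setminus B_j}$, so for $j \geq 2$ its modular is bounded by $(2/j)^{q^{-}}\rho_{q(\cdot)}(u) \to 0$. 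Thus $u_j \to u$ in $W^{1,q(m)}(\mathrm{M})$, reducing matters to the compactly supported case.

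Finally I would approximate each compactly supported $u_j$ by smooth compactly supported functions using a finite partition of unity subordinate to a cover of $\operatorname{supp}u_j$ by charts satisfying the metric comparison $\tfrac{1}{2}\delta_{ij} \leq g_{ij} \leq 2\,\delta_{ij}$, pulling back to $\mathbb{R}^N$ and mollifying there. This last smoothing step is the main obstacle: in variable-exponent spaces the density of smooth functions can fail (the Lavrentiev phenomenon) unless the exponent is regular, so it is precisely the log-Hölder regularity of $q$—transported to the charts via the proposition relating $\mathcal{P}^{\log}(\mathrm{M})$ and $\mathcal{P}^{\log}(\mathbb{R}^N)$—that guarantees the mollifications converge in the $W^{1,q(m)}$ norm. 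Combining the two approximations produces smooth compactly supported functions dense in $W^{1,q(m)}(\mathrm{M})$, which yields $W^{1,q(m)}(\mathrm{M}) = W^{1,q(m)}_0(\mathrm{M})$.
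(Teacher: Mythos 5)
The paper does not prove this proposition at all: it is quoted as a black box from Aubin's book (where the exponent is constant), so there is no internal argument to compare yours against. What you have written is the standard proof of Aubin's density theorem, correctly transposed to the variable-exponent setting: Hopf--Rinow gives the compact exhaustion, the $1$-Lipschitz distance function gives cut-offs with $|\nabla\eta_j|\le 2/j$, and your modular estimates are legitimate because $q^{+}<\infty$ makes modular convergence to $0$ equivalent to norm convergence. The only technical point you elide in the cut-off stage is that $\eta_j$ is merely Lipschitz (the distance function is not smooth on the cut locus), but a Lipschitz compactly supported multiplier is all that is needed there, so the reduction to compactly supported $u$ is sound.

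Two caveats deserve to be made explicit. First, your final smoothing step genuinely uses more than the proposition hypothesizes: the statement assumes only completeness of $(\mathrm{M},g)$, yet density of smooth functions in $W^{1,q(\cdot)}$ can fail for irregular exponents (Zhikov's Lavrentiev examples), so the log-H\"older regularity you invoke is an \emph{additional} assumption that the proposition, as stated, omits; in the constant-exponent source this issue does not arise. You are right to flag this as the real content of the proof, but it means the proposition is only true under a regularity hypothesis on $q$. Second, in the setting of this paper $\mathrm{M}$ is compact with boundary: compactness makes your exhaustion stage vacuous, and in the presence of a boundary the identity $W^{1,q(\cdot)}=W^{1,q(\cdot)}_{0}$ holds only because the paper defines $W^{1,q(m)}_{0}(\mathrm{M})$ as the closure of all of $C^{\infty}(\mathrm{M})$ rather than of compactly supported smooth functions; your argument, which produces $C_{c}^{\infty}$ approximants, proves the stronger (and standard) statement on a boundaryless complete manifold.
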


Let $ D \Psi = D \mathcal{J} - DI: \, L^{q( m )}_{1} ( \mathrm{M} ) \cap L^{p( m )}_{1} ( \mathrm{M} ) \rightarrow \mbox{Hom} ( L^{q( m )}_{1} ( \mathrm{M} ) \cap L^{p( m )}_{1} ( \mathrm{M} ), \, \mathbb{R} )$ the differential of $ \Psi = \mathcal{J} - I $ with the functional $\mathcal{J}: \mathcal{X} \rightarrow \mathbb{R} $ defined by:
$$\mathcal{J} ( u ) = \int_{\mathrm{M}} \frac{1}{p( m )} |\, \nabla u( m )\,|^{p( m )} \,\, dv_{g} ( m ) + \int_{\mathrm{M}} \frac{1}{q( m )} |\, \nabla u( m )\,|^{q( m )} \,\, dv_{g} ( m )$$
and $$ I ( u ) = \int_{\mathrm{M}} H( m, u( m ) ) \,\, dv_{g} ( m ).$$
Where $\mathcal{X} = W_{0}^{1, q( m )} ( \mathrm{M} )\, \cap \, W_{0}^{1, p( m )} ( \mathrm{M} ),$ with the norm $ || u ||_{\mathcal{X}} = || u ||_{q( m )} + || u ||_{p( m )} \,\, \mbox{for all} \,\, m \in \mathrm{M}.$
Then, for all $\varphi \in D( \mathrm{M} )$ we have
$$\langle \, \mathcal{J} ( u ), \, \varphi \,\rangle = \int_{\mathrm{M}} |\, \nabla u( m )\,|^{p( m ) - 2} u( m ) \, \varphi ( m ) \,\, dv_{g} ( m ) + \int_{\mathrm{M}} |\, \nabla u( m ) \,|^{q( m ) - 2} u( m )\, \varphi ( m )\,\, dv_{g} ( m ).$$
\begin{lemma}
The following assumptions hold:
\begin{itemize}
\item[i/] $\mathcal{J}$ is a continuous, bounded homeomorphism and strictly monotone operator.
\item[ii/] $\mathcal{J}$ is of type $( \mathcal{S}_{+} )$, i.e. if $ u_{n} \rightharpoonup u$ and $$\lim_{n \rightarrow + \infty} \, \sup \langle \, \mathcal{J} ( u_{n} ) - \mathcal{J} ( u ), \, u_{n} - u \, \rangle \, \leq 0,$$ then, $u_{n} \longrightarrow u. $
\end{itemize}
\end{lemma}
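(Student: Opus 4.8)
The plan is to treat this as the standard, if technically involved, result on $(p(\cdot),q(\cdot))$-Laplacian operators in variable-exponent spaces, handling the two gradient terms of $\mathcal{J}$ in parallel since both are nonnegative and decouple in the duality pairing. Throughout I would use the modular-norm relations recalled in the preliminaries together with H\"older's inequality in the spaces $L^{p(\cdot)}(\mathrm{M})$ and $L^{q(\cdot)}(\mathrm{M})$, and the reflexivity of $\mathcal{X}$, which holds because $1<q^{-}\leq p^{+}<\infty$ by \eqref{0}.

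For part (i), I would first establish boundedness and continuity by viewing $\mathcal{J}$ as built from the superposition operators $\xi\mapsto|\xi|^{p(m)-2}\xi$ and $\xi\mapsto|\xi|^{q(m)-2}\xi$ acting on $\nabla u$; these map $L^{p(\cdot)}$ (resp. $L^{q(\cdot)}$) continuously into the dual spaces $L^{p'(\cdot)}$ (resp. $L^{q'(\cdot)}$), so $\mathcal{J}\colon\mathcal{X}\to\mathcal{X}^{*}$ carries bounded sets to bounded sets and is continuous. Strict monotonicity I would obtain from the elementary vector inequality $(|\xi|^{t-2}\xi-|\eta|^{t-2}\eta)\cdot(\xi-\eta)>0$ for $\xi\neq\eta$ and $t>1$, applied pointwise with $t=p(m)$ and $t=q(m)$ and integrated over $\mathrm{M}$; since each contribution is strictly positive whenever $\nabla u\neq\nabla v$ on a set of positive measure, this gives $\langle\mathcal{J}(u)-\mathcal{J}(v),\,u-v\rangle>0$ for $u\neq v$.

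To promote strict monotonicity to the homeomorphism claim, I would invoke the Minty--Browder theorem: on the reflexive Banach space $\mathcal{X}$, a bounded, continuous, strictly monotone, coercive operator into the dual is a bijection with continuous inverse. The only remaining ingredient is coercivity, which follows from $\langle\mathcal{J}(u),u\rangle=\rho_{p(\cdot)}(\nabla u)+\rho_{q(\cdot)}(\nabla u)$ together with the standard bound $\rho_{p(\cdot)}(\nabla u)\geq\|u\|_{\mathcal{X}}^{p^{-}}$ once $\|u\|_{\mathcal{X}}$ is large, and $p^{-}>1$ from \eqref{0}; dividing by $\|u\|_{\mathcal{X}}$ and letting it tend to $\infty$ yields the divergence of the quotient.

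For part (ii), I would proceed as follows. Since $u_{n}\rightharpoonup u$, the term $\langle\mathcal{J}(u),\,u_{n}-u\rangle\to 0$; combined with the hypothesis this gives $\limsup_{n}\langle\mathcal{J}(u_{n}),\,u_{n}-u\rangle\leq 0$, while monotonicity gives $\langle\mathcal{J}(u_{n})-\mathcal{J}(u),\,u_{n}-u\rangle\geq 0$, so the pairing tends to $0$. The core step, and the main obstacle, is converting this vanishing pairing into strong convergence of the gradients and hence of $u_{n}$ in $\mathcal{X}$: because $p(m)$ and $q(m)$ may take values both above and below $2$, the Clarkson-type lower bounds for $(|\xi|^{t-2}\xi-|\eta|^{t-2}\eta)\cdot(\xi-\eta)$ are not uniform. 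I would therefore split $\mathrm{M}$ into the regions where the exponent is $\geq 2$ and where it lies in $(1,2)$, apply on each the corresponding inequality, and control the $(1,2)$ part via H\"older's inequality in the variable-exponent spaces using the boundedness of $\|\nabla u_{n}\|$. This forces $\rho_{p(\cdot)}(\nabla u_{n}-\nabla u)\to 0$ and $\rho_{q(\cdot)}(\nabla u_{n}-\nabla u)\to 0$, and the modular-norm equivalence then delivers $\|u_{n}-u\|_{\mathcal{X}}\to 0$.
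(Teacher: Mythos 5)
The paper states this lemma without any proof --- no proof environment follows it and no citation is attached --- so there is nothing in the source to compare your argument against; the authors are evidently treating it as a known fact from the variable-exponent literature (it is the standard monotonicity/$(\mathcal{S}_{+})$ lemma for $p(x)$-Laplacian-type operators, cf.\ \cite{guo2015dirichlet, zhang2015existence}). Your outline is that standard proof and is essentially sound: Nemytskii-operator continuity and boundedness of $\xi\mapsto|\xi|^{p(m)-2}\xi$ from $L^{p(\cdot)}$ into $L^{p'(\cdot)}$, pointwise strict monotonicity of these maps integrated over $\mathrm{M}$, coercivity via the modular--norm inequalities, and, for $(\mathcal{S}_{+})$, the splitting of $\mathrm{M}$ according to whether the exponent is $\geq 2$ or lies in $(1,2)$ together with the corresponding Simon-type inequalities, concluding with modular convergence of both gradient terms and the Poincar\'e inequality. (You also correctly read $\langle\mathcal{J}(u),\varphi\rangle$ as the gradient pairing; the paper's displayed formula, with $u\,\varphi$ in place of $g(\nabla u,\nabla\varphi)$, is a typo.)

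One point to tighten: the Browder--Minty theorem as you invoke it gives bijectivity of a bounded, hemicontinuous, coercive, strictly monotone operator, but it does not by itself give norm-to-norm continuity of the inverse. That last step is normally extracted from the $(\mathcal{S}_{+})$ property: if $\mathcal{J}(u_{n})\to f$ in $\mathcal{X}^{*}$, coercivity bounds $(u_{n})$, a subsequence converges weakly to some $u$, and then $\langle\mathcal{J}(u_{n})-\mathcal{J}(u),\,u_{n}-u\rangle\to 0$ forces $u_{n}\to u$. So you should either establish part (ii) before completing the homeomorphism claim in part (i), or explicitly flag this dependence; as written, the continuity of $\mathcal{J}^{-1}$ is left unjustified.
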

We recall G. Cerami's definition of Cerami sequences $( Cer )$ in \cite{cerami1978existence}.
\begin{definition} (\cite{cerami1978existence})\label{cer}
Given $( \mathcal{E}, \, ||\,\cdot\,|| )$ a Banach space  and $\Phi \in C^{1} ( \mathcal{E}, \, \mathbb{R} ).$ Let $ c \in \mathbb{R},$  $\Phi $ is said to satisfy the Cerami $(Cer)$ condition (we denote condition $( Cer_{c} )$), if:
\begin{itemize}
\item[$( Cer_{1} ) $:] Every bounded sequence ${\, u_{n} \,} \subset \mathcal{E}$ such as $ \Phi ( u_{n} ) \rightarrow c $ and $\Phi'( u_{n} ) \rightarrow 0$ has a convergent subsequence.
\item[$( Cer_{2} )$ :] There exist constant $ \eta, \, \alpha,\, \beta > 0$ such as $$ ||\, \Psi'( u ) \,||_{\mathcal{E}^{*}} \, ||\, u\,|| \geq \beta \,\, \mbox{for all} \,\, u \in \Psi^{-1} ( [\, c - \eta, \, c + \eta \,] ) \,\, \mbox{with} \,\, ||\, u\,|| \geq \alpha.$$
\end{itemize}
\end{definition}
We say that $ \Psi $ satisfies condition $( Cer )$, if $ \Psi \in C^{1} ( \mathcal{X}, \, \mathbb{R} )$ satisfies condition $( Cer_{c} )$ for every $ c \in \mathbb{R}.$ \\
Let's review the following version of the mountain pass Lemma with Cerami sequences, which will be applied in the next section.
\begin{proposition} (\cite{cerami1978existence})\label{prop1}
Given $ ( \mathcal{E}, \, ||\cdot|| )$ a Banach space, $\Psi \in C^{1} ( \mathcal{X}, \, \mathbb{R} ), \,\, u_{0} \in \mathcal{E}$ and $ \upsilon > 0,$ such that $ || u_{0} || > \upsilon$ and $$ b = \inf_{|| u_{0} || = \upsilon} \Psi ( u ) > \Psi ( 0 ) \geq \Psi ( u_{0} ).$$
If $\Psi $ satisfies the condition $( Cer_{c} )$ with $$ c = \inf_{\nu \in \Gamma} \max_{t \in [ 0, \, 1 ]} \Psi ( \nu ( t ) ), \,\,\, \Gamma = \{ \nu \in C( [ 0, \, 1 ],\, \mathcal{X} ) \,\mbox{such as} \,\, \nu ( 0 ) = 0, \, \nu ( 1 ) = u_{0} \,\}.$$
Then $c$ is a critical value of $\Psi.$
\end{proposition}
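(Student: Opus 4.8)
The plan is to combine the mountain pass geometry encoded in the hypotheses with a deformation argument tailored to the Cerami condition, and to run the final step by contradiction. First I would check that $c$ is well defined, finite, and satisfies $c\geq b$. Every path $\nu\in\Gamma$ starts at $\nu(0)=0$ with $\|\nu(0)\|=0<\upsilon$ and ends at $\nu(1)=u_{0}$ with $\|u_{0}\|>\upsilon$, so continuity of $t\mapsto\|\nu(t)\|$ together with the intermediate value theorem yields a $t^{*}\in(0,1)$ with $\|\nu(t^{*})\|=\upsilon$; hence $\max_{t}\Psi(\nu(t))\geq\Psi(\nu(t^{*}))\geq b$, and taking the infimum over $\Gamma$ gives $c\geq b>\Psi(0)\geq\Psi(u_{0})$. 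Finiteness follows by testing with the straight segment $\nu_{0}(t)=tu_{0}$, along which $\Psi$ is continuous on the compact interval $[0,1]$ and hence bounded, so $c<+\infty$.

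Suppose, for contradiction, that $c$ is not a critical value of $\Psi$. I would first convert the two-part condition $(Cer_{c})$ into a single uniform estimate: there exist $\epsilon\in\big(0,\tfrac{1}{2}(c-\Psi(0))\big)$ and $\gamma>0$ such that
$$(1+\|u\|)\,\|\Psi'(u)\|_{\mathcal{E}^{*}}\geq\gamma\quad\text{for all }u\in\Psi^{-1}([c-\epsilon,\,c+\epsilon]).$$
Indeed, were no such bound available, one could extract a sequence $\{u_{n}\}$ in the band with $(1+\|u_{n}\|)\|\Psi'(u_{n})\|\to0$; part $(Cer_{2})$ rules out any subsequence with $\|u_{n}\|\geq\alpha$ (such a subsequence would keep the product bounded below by $\beta>0$), so $\{u_{n}\}$ must be bounded with $\Psi'(u_{n})\to0$, and then $(Cer_{1})$ furnishes a convergent subsequence whose limit is a critical point at level $c$, contradicting our assumption.

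With this estimate I would build a deformation of the sublevel sets $\Psi^{a}:=\{u\in\mathcal{X}:\Psi(u)\leq a\}$. Since $\mathcal{X}$ is only a Banach space, I use a locally Lipschitz pseudo-gradient vector field for $\Psi$ on the set of regular points, rescaled by the Cerami weight $(1+\|u\|)$ to control the flow at infinity and cut off by Urysohn-type functions so that it is supported in the band $\Psi^{-1}([c-\epsilon,\,c+\epsilon])$ and vanishes on $\Psi^{-1}((-\infty,\,c-\epsilon])$. Solving the Cauchy problem $\dot\sigma=-W(\sigma)$, $\sigma(0,\cdot)=\mathrm{id}$ produces a continuous flow $\sigma:[0,1]\times\mathcal{X}\to\mathcal{X}$, globally defined because $W$ grows at most linearly in $\|u\|$, satisfying $\sigma(1,\Psi^{c+\epsilon})\subset\Psi^{c-\epsilon}$ and $\sigma(t,u)=u$ whenever $\Psi(u)\leq c-\epsilon$. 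To conclude, I pick $\nu\in\Gamma$ with $\max_{t}\Psi(\nu(t))\leq c+\epsilon$, which exists by the definition of $c$ as an infimum. Because $\epsilon<\tfrac{1}{2}(c-\Psi(0))$, the endpoints satisfy $\Psi(\nu(0))=\Psi(0)\leq c-\epsilon$ and $\Psi(\nu(1))=\Psi(u_{0})\leq\Psi(0)\leq c-\epsilon$, so they are fixed by $\sigma(1,\cdot)$ and $\tilde\nu:=\sigma(1,\nu(\cdot))$ again lies in $\Gamma$; yet $\max_{t}\Psi(\tilde\nu(t))\leq c-\epsilon<c$, contradicting $c=\inf_{\Gamma}\max_{t}\Psi$. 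Hence $c$ is a critical value.

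The main obstacle is the deformation construction itself, namely producing the $(1+\|u\|)$-weighted, cut-off, locally Lipschitz pseudo-gradient field and proving that its flow is globally defined and strictly lowers $\Psi$ across the band $[c-\epsilon,c+\epsilon]$. The weighting by $(1+\|u\|)$ is precisely what the Cerami condition, as opposed to Palais--Smale, is designed to supply, and keeping the bookkeeping between the two clauses $(Cer_{1})$ and $(Cer_{2})$ consistent is the delicate point.
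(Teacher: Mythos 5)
The paper offers no proof of this proposition: it is quoted from \cite{cerami1978existence} as a known Cerami variant of the mountain pass theorem, so there is no internal argument to compare yours against. Your outline is the standard quantitative-deformation proof and is essentially sound: the intermediate value theorem step giving $c\geq b>\Psi(0)\geq\Psi(u_{0})$ is correct (the statement's $\inf_{\|u_{0}\|=\upsilon}$ is evidently a typo for $\inf_{\|u\|=\upsilon}$, as you implicitly read it), and reducing $(Cer_{c})$ to the uniform estimate $(1+\|u\|)\,\|\Psi'(u)\|\geq\gamma$ on a band around $c$, then running a cut-off, $(1+\|u\|)$-weighted pseudo-gradient flow, is exactly how the Cerami condition replaces Palais--Smale in the deformation lemma. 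Two quantifier points deserve more care than your sketch gives them. First, to invoke $(Cer_{1})$ you need $\Psi(u_{n})\to c$, not merely $\Psi(u_{n})\in[c-\epsilon,c+\epsilon]$ for one fixed $\epsilon$; the correct negation of ``there exist $\epsilon,\gamma>0$ such that the bound holds on the $\epsilon$-band'' is over all pairs $(\epsilon,\gamma)$, so you should take $\epsilon_{n}=\gamma_{n}=1/n$ and extract a diagonal sequence (and also check $\epsilon_{n}\leq\eta$ so that $(Cer_{2})$ applies). Second, in the deformation step the decrease of $\Psi$ along the unit-time flow inside the band is of order $\gamma$, while $\gamma$ was produced from a band of width $\epsilon$; since you need the total drop $2\epsilon$ to be smaller than that decrease, you must shrink $\epsilon$ a second time (to something like $\min\{\bar\epsilon,\gamma/8\}$, observing that the uniform estimate persists on smaller bands) before concluding $\sigma(1,\Psi^{c+\epsilon})\subset\Psi^{c-\epsilon}$. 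With those adjustments, and the (routine but nontrivial) construction of the locally Lipschitz weighted pseudo-gradient field that you correctly flag as the main technical burden, the argument is complete.
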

\begin{remark}\label{remark1}
As $\mathcal{X}$ is a reflexive and separable Banach space. Then, there exist $\{ \mathrm{e}_{j} \}_{i = 1}^{\infty} \subset \mathcal{X}$ and $\{ \mathrm{e}^{*}_{i} \}_{i = 1}^{\infty} \subset \mathcal{X}^{*}$ such that
$$ \langle \mathrm{e}^{*}_{i}, \, \mathrm{e}_{j} \rangle \,= \delta_{i,\,j},$$ with $ \delta_{i,\,j}$ is the Kronecker delta symbol.
Hence, $$\mathcal{X} = \overline{\mbox{span}} \, \{ \mathrm{e}_{i}, \, i \geq 1 \,\} \,\, \mbox{and} \,\,\mathcal{X}^{*} = \overline{\mbox{span}} \, \{ \mathrm{e}^{*}_{i}, \, i \geq 1 \,\}.  $$
Let $\mathcal{X}^{k} = \mbox{span} \,\{\, \mathrm{e}_{k} \,\}, \,\, Y_{k} = \displaystyle \bigoplus_{i = 0}^{k} \mathcal{X}^{i}, \,\, \mbox{and} \,\, Z_{k} = \displaystyle \overline{\bigoplus_{i = k}^{\infty} \mathcal{X}^{i}},$ for any $k \geq 1.$
\end{remark}
\begin{theorem} (Fountain Theorem, \cite{zou2001variant}) \label{prop2}
Suppose that $\mathcal{X}$ is a separable Banach space, $\Psi \in C^{1} ( \mathcal{X}, \, \mathbb{R} )$ is an even functional satisfying the Cerami sequences $( Cer ).$ Moreover, for all $ k \geq 1$ there exist $ D_{k} > d_{k} > 0$ such as
\begin{itemize}
\item[$( A_{1} )$:] $\displaystyle \lim_{k \rightarrow + \infty} \inf_{\{ u \in Z_{k} : || u || = d_{k} \}} \Psi ( u ) = + \infty.$
\item[$( A_{2} )$:] $ \displaystyle \max_{\{ u \in Y_{k} : || u || = D_{k} \}} \Psi ( u ) \leq 0.$
\end{itemize}
Then, $\Psi$ has a sequence of critical values which tends to $+ \infty.$
\end{theorem}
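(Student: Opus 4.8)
The plan is to prove the statement through the classical symmetric (equivariant) minimax scheme built over the finite-dimensional pieces $Y_{k}$, joined to a Borsuk--Ulam type intersection that couples the behaviour on $Y_{k}$ with that on $Z_{k}$, and closed by a quantitative deformation lemma tailored to the Cerami condition $( Cer )$. For each $k \geq 1$ I would set $B_{k} = \{ u \in Y_{k} : || u || \leq D_{k} \}$, with relative boundary $N_{k} = \{ u \in Y_{k} : || u || = D_{k} \}$, and introduce the class of admissible maps
$$ \Gamma_{k} = \{ \, \gamma \in C( B_{k}, \, \mathcal{X} ) : \, \gamma \,\, \mbox{is odd and} \,\, \gamma|_{N_{k}} = \mathrm{id} \, \}, $$
together with the minimax level
$$ c_{k} = \inf_{\gamma \in \Gamma_{k}} \, \max_{u \in B_{k}} \, \Psi ( \gamma ( u ) ). $$
Since $\mathrm{id}_{B_{k}} \in \Gamma_{k}$ and $B_{k}$ is compact, $c_{k}$ is finite for each $k$; moreover $( A_{2} )$ gives $\Psi \leq 0$ on $N_{k}$, a fact I will exploit later to freeze the boundary during the deformation.

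The first key step is the intersection property: for every $\gamma \in \Gamma_{k}$ the image $\gamma ( B_{k} )$ meets the sphere $S_{k} = \{ u \in Z_{k} : || u || = d_{k} \}$. This is the classical intersection lemma of the fountain scheme, proved by a Borsuk--Ulam / topological-degree argument for odd maps: writing $P_{k}$ for the projection of $\mathcal{X}$ onto the $k$-dimensional complement $Y_{k-1}$ of $Z_{k}$, the oddness of $\gamma$, the boundary condition $\gamma|_{N_{k}} = \mathrm{id}$, and the dimension inequality $\dim Y_{k} = k + 1 > k = \mathrm{codim}\, Z_{k}$ force a point $u \in B_{k}$ with $P_{k} \gamma ( u ) = 0$ and $|| \gamma ( u ) || = d_{k}$, that is $\gamma ( u ) \in S_{k}$. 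It follows that $\max_{u \in B_{k}} \Psi ( \gamma ( u ) ) \geq \inf_{S_{k}} \Psi$ for every $\gamma \in \Gamma_{k}$, whence
$$ c_{k} \geq b_{k} := \inf_{\{ u \in Z_{k} : || u || = d_{k} \}} \Psi ( u ). $$
Hypothesis $( A_{1} )$ then delivers $b_{k} \to + \infty$, so that $c_{k} \to + \infty$, and in particular $c_{k} > 0 \geq \sup_{N_{k}} \Psi$ for all large $k$.

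It remains to verify that each $c_{k}$ is a critical value of $\Psi$, which I would do by contradiction: if $c_{k}$ were regular, I would construct out of $( Cer_{c_{k}} )$ an odd deformation lowering the level. Condition $( Cer_{2} )$ bounds $|| \Psi'( u ) ||_{\mathcal{X}^{*}}\, || u ||$ away from zero on the strip $\Psi^{-1} ( [ c_{k} - \eta, \, c_{k} + \eta ] )$ outside a large ball, while $( Cer_{1} )$ supplies compactness of bounded Cerami sequences; together they allow me to build a locally Lipschitz pseudo-gradient field, weighted by $( 1 + || u || )$, whose flow is globally defined and pushes $\Psi^{c_{k} + \varepsilon}$ into $\Psi^{c_{k} - \varepsilon}$ for small $\varepsilon > 0$. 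Since $\Psi$ is even the field may be chosen odd, so the time-one map $\eta$ is an odd homeomorphism; as $\Psi \leq 0 < c_{k} - \varepsilon$ on $N_{k}$, the map $\eta$ fixes $N_{k}$. Composing a nearly optimal $\gamma \in \Gamma_{k}$ with $\eta$ yields a new member of $\Gamma_{k}$ whose maximum of $\Psi$ over $B_{k}$ is below $c_{k}$, contradicting the definition of $c_{k}$. Hence every $c_{k}$ is critical, and $c_{k} \to + \infty$ produces the desired unbounded sequence of critical values.

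The step I expect to be the main obstacle is precisely the construction of the \emph{odd} deformation under $( Cer )$ rather than under the Palais--Smale condition. The weight $( 1 + || u || )$ is indispensable because $( Cer_{2} )$ only controls $|| \Psi'( u ) ||_{\mathcal{X}^{*}}\, || u ||$ from below, so the unweighted negative pseudo-gradient flow may fail to be complete or to descend by the required amount; one must verify simultaneously the completeness of the weighted flow and the quantitative decrease of $\Psi$ along its trajectories. Keeping this field equivariant for the $\mathbb{Z}_{2}$-action $u \mapsto - u$ — which is exactly what keeps $\eta \circ \gamma$ inside the odd class $\Gamma_{k}$ — is the delicate technical point, and once this equivariant Cerami deformation lemma is in hand the minimax and intersection arguments go through routinely.
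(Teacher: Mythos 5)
The paper offers no proof of this statement at all: Theorem \ref{prop2} is quoted verbatim as a known result from \cite{zou2001variant} and is used as a black box in the proof of Theorem \ref{theo6}. So there is no ``paper's own proof'' to compare against; what you have written is a reconstruction of the standard argument behind the cited result.

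As such a reconstruction, your outline is the correct one and matches the classical fountain scheme (Willem's version for Palais--Smale, adapted by Zou and by Bartolo--Benci--Fortunato to Cerami sequences): the odd admissible classes $\Gamma_{k}$ over $B_{k}=\{u\in Y_{k}:\|u\|\le D_{k}\}$, the minimax levels $c_{k}$, the Borsuk--Ulam intersection lemma giving $\gamma(B_{k})\cap\{u\in Z_{k}:\|u\|=d_{k}\}\neq\emptyset$ (this is where $D_{k}>d_{k}$ and the splitting $\mathcal{X}=Y_{k-1}\oplus Z_{k}$ are used, and your dimension count $\dim Y_{k}=k+1>k=\operatorname{codim}Z_{k}$ is consistent with the paper's Remark \ref{remark1}), hence $c_{k}\ge\inf_{\{u\in Z_{k}:\|u\|=d_{k}\}}\Psi(u)\to+\infty$ by $(A_{1})$, and finally the contradiction via an odd deformation that fixes $N_{k}$ because $(A_{2})$ keeps $\Psi\le 0$ there while $c_{k}>0$ for large $k$. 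The one step you assert rather than prove is the equivariant quantitative deformation lemma under $(Cer)$ with the weight $(1+\|u\|)$ on the pseudo-gradient field; you correctly identify this as the crux, and it is precisely the content of the cited literature, so invoking it (or citing Cerami/Bartolo--Benci--Fortunato/Zou for it) closes the argument. Two small points worth tightening: the conclusion only requires $c_{k}$ to be critical for $k$ large (which is all your argument delivers, and all that is needed), and the intersection lemma should be stated with the explicit hypothesis $0<d_{k}<D_{k}$, which the theorem provides.
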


\section{Existence and multiplicity of non-trivial solutions}\label{sec3}

In this part, we give our main results and we note by $D( \mathrm{M} )$ the space of $C^{\infty}$ functions with compact support in $\mathrm{M}$.
\begin{definition}
We said that $u \in \mathcal{X}$ be a non-trivial solution of the problem \eqref{problem} if
\begin{align*}
&\int_{\mathrm{M}} | \nabla u( m )|^{p( m ) - 2} g( \nabla u( m ), \, \nabla \phi ( m ) ) \,\, dv_{g} ( m ) + \int_{\mathrm{M}} | \nabla u( m ) |^{q( m ) - 2} g( \nabla u( m ), \nabla \phi ( m ) ) \,\, dv_{g} ( m )\\& - \int_{\mathrm{M}} h( m, u( m ) ) \,.\, \phi ( m ) \,\, dv_{g} ( m ) = 0,  \,\,\, \forall \, \phi \in D( \mathrm{M} ).
\end{align*}

Defining the energy function $ \Psi : \mathcal{X} \rightarrow \mathbb{R} $ by
\begin{align*}
\Psi ( u ) =& \int_{\mathrm{M}} \frac{1}{p( m )} | \nabla u( m )|^{p( m )} \,\, dv_{g} ( m ) + \int_{\mathrm{M}} \frac{1}{q( m )} | \nabla u( m ) |^{q( m )} \,\, dv_{g} ( m )\\& - \int_{\mathrm{M}} H( m, u( m ) ) \,\, dv_{g} ( m ).
\end{align*}
By condition $( h_{1} )$ and Theorems \ref{theo1} and \ref{theo2}, $ \Psi \in C^{1} ( \mathcal{X}, \, \mathbb{R} )$ is well-defined.\\
Moreover, for all $ \phi \in D( \mathrm{M} )$ we have
\begin{align*}
\langle \Psi'( u ), \phi \rangle \, = &\int_{\mathrm{M}} | \nabla u( m )|^{p( m ) - 2} g( \nabla u( m ), \, \nabla \phi ( m ) )\,\, dv_{g} ( m ) \\&+ \int_{\mathrm{M}} | \nabla u( m )|^{q( m ) - 2} g( \nabla u( m ), \, \nabla \phi ( m ) )\,\, dv_{g} ( m ) \\&- \int_{\mathrm{M}} h( m, u( m ) ) \,.\, \phi ( m ) \,\, dv_{g} ( m ) \hspace*{1cm} \forall u \in \mathcal{X}
\end{align*}
\end{definition}

\begin{lemma}\label{lemma2}
Assume that the assumptions $(h_{1} ), \, ( h_{2} ),$ and $( h_{4} )$ are satisfied. Then the functional $\Psi$ fulfils $(Cer_{1} ) - (Cer_{2} )$ of Definition \ref{cer}.
\end{lemma}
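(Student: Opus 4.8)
The plan is to verify the two Cerami conditions separately, with the core work concentrated in establishing $(Cer_1)$—the compactness of bounded sequences along which $\Psi(u_n)\to c$ and $\Psi'(u_n)\to 0$.

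For $(Cer_1)$, I would start from a bounded sequence $\{u_n\}\subset\mathcal{X}$ with $\Psi(u_n)\to c$ and $\Psi'(u_n)\to 0$ in $\mathcal{X}^*$. Since $\mathcal{X}$ is reflexive (Remark \ref{remark1}), boundedness yields a weakly convergent subsequence $u_n\rightharpoonup u$ in $\mathcal{X}$. By the compact embeddings of Theorems \ref{theo1} and \ref{theo2}, I obtain $u_n\to u$ strongly in $L^{r(\cdot)}(\mathrm{M})$ (using $r(m)<q^*(m)$ from $(h_1)$). The next step is to test the relation $\langle\Psi'(u_n),u_n-u\rangle\to 0$: since $\Psi'(u_n)\to 0$ and $\{u_n-u\}$ is bounded, the pairing tends to zero. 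Writing out $\langle\Psi'(u_n),u_n-u\rangle$, the lower-order term $\int_{\mathrm{M}} h(m,u_n)(u_n-u)\,dv_g$ is controlled by the growth bound in $(h_1)$ together with the Hölder inequality and the strong $L^{r(\cdot)}$-convergence, so it vanishes in the limit. This isolates the principal part, giving $\limsup_n\langle\mathcal{J}(u_n),u_n-u\rangle\le 0$, where $\mathcal{J}$ collects the two $p(m)$- and $q(m)$-gradient terms. Invoking the $(\mathcal{S}_+)$ property of $\mathcal{J}$ (Lemma preceding Definition \ref{cer}), I conclude $u_n\to u$ strongly in $\mathcal{X}$, which is exactly $(Cer_1)$.

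For $(Cer_2)$, the goal is to produce constants $\eta,\alpha,\beta>0$ with $\|\Psi'(u)\|_{\mathcal{X}^*}\|u\|\ge\beta$ for all $u\in\Psi^{-1}([c-\eta,c+\eta])$ satisfying $\|u\|\ge\alpha$. Here I would argue by contradiction: suppose no such $\beta$ exists, so there is a sequence $u_n$ with $\Psi(u_n)\to \tilde c\in[c-\eta,c+\eta]$, $\|u_n\|\to\infty$, and $\|\Psi'(u_n)\|_{\mathcal{X}^*}\|u_n\|\to 0$. The key identity to exploit is the difference $\lambda\Psi(u_n)-\langle\Psi'(u_n),u_n\rangle$ for a suitable choice of $\lambda\in[2q^-,2p^+]$, which after integrating produces the quantity $\int_{\mathrm{M}} G_\lambda(m,u_n)\,dv_g$ together with controlled multiples of the modular terms $\rho_{p(\cdot)}(\nabla u_n)$ and $\rho_{q(\cdot)}(\nabla u_n)$. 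This is precisely where the family $\mathcal{H}$ and the structural hypotheses $(h_2)$ and $(h_4)$ enter: $(h_2)$ forces the superlinear growth $h(m,s)s/|s|^{p^+}\to+\infty$, so the $G_\lambda$-term dominates, while $(h_4)$ supplies the uniform comparison $\theta\,G_\lambda(m,s)\ge G_\eta(m,\beta s)$ that lets me control $G_\lambda$ along a rescaling $v_n=u_n/\|u_n\|$ with bounded $\mathcal{X}$-norm.

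The main obstacle will be $(Cer_2)$, specifically extracting a contradiction from the rescaled sequence. After setting $v_n=u_n/\|u_n\|$ (so $\|v_n\|=1$ and, up to a subsequence, $v_n\rightharpoonup v$ with $v_n\to v$ a.e. and in $L^{r(\cdot)}$), I would split $\mathrm{M}$ into the region where $v\neq 0$ and where $v=0$. On $\{v\neq 0\}$ one has $|u_n|\to\infty$ pointwise, and $(h_2)$ makes the normalized nonlinear energy blow up; the comparison in $(h_4)$ with $\beta=\|u_n\|^{-1}\le 1$ (for large $n$) is what converts the superlinearity of $h$ into a usable lower bound on $\int G_\lambda(m,u_n)$ even though $\Delta_{p(m)}$ and $\Delta_{q(m)}$ are non-homogeneous. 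Reconciling the divergence of the nonlinear term against the hypothesis $\|\Psi'(u_n)\|\,\|u_n\|\to 0$ and the boundedness of $\Psi(u_n)$ yields the contradiction. The delicate point throughout is handling the variable exponents through modulars rather than norms—one must carefully pass between $\rho_{p(\cdot)}$, $\rho_{q(\cdot)}$ and the norm $\|\cdot\|_{\mathcal{X}}$ using the standard modular--norm inequalities (with different behaviour according to whether the norm is $\le 1$ or $\ge 1$), since the lack of homogeneity prevents the clean scaling available in the constant-exponent case.
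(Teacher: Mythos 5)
Your treatment of $(Cer_{1})$ coincides with the paper's: weak convergence of a bounded subsequence, the compact embeddings of Theorems \ref{theo1}--\ref{theo2} to kill the term $\int_{\mathrm{M}} h(m,u_{j})(u_{j}-u)\,dv_{g}$, and the $(\mathcal{S}_{+})$ property of $\mathcal{J}$ to upgrade to strong convergence. The contradiction setup for $(Cer_{2})$ and your handling of the region where the weak limit of $w_{j}=u_{j}/\|u_{j}\|$ is nonzero also match the paper (there the argument is $(h_{2})$ plus Fatou applied to $\frac{h(m,u_{j})u_{j}}{|u_{j}|^{p^{+}}}|w_{j}|^{p^{+}}$, together with a uniform lower bound on $h(m,t)t/|t|^{p^{+}}$; note that $(h_{4})$ plays no role in that region).

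The genuine gap is the case $w\equiv 0$, which your plan does not address and which is where the real difficulty of the lemma lies. Your proposed use of $(h_{4})$ with $\beta=\|u_{n}\|^{-1}$ cannot work: it yields $\theta\,G_{\lambda}(m,u_{n})\ge G_{\eta}(m,w_{n})$ with $\|w_{n}\|_{\mathcal{X}}=1$, so the right-hand side stays bounded and you obtain only a lower bound of the form $\int_{\mathrm{M}}G_{\lambda}(m,u_{n})\,dv_{g}\ge -C$, which does not contradict the boundedness of $\lim_{j}\{\Psi(u_{j})-\frac{1}{\alpha_{j}}\langle\Psi'(u_{j}),u_{j}\rangle\}$. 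What is needed (and what the paper does, following Jeanjean) is an \emph{intermediate} scale: choose $t_{j}\in[0,1]$ with $\Psi(t_{j}u_{j})=\max_{t\in[0,1]}\Psi(tu_{j})$, and show $\Psi(t_{j}u_{j})\to+\infty$ by evaluating $\Psi$ at $\tilde{w}_{j}=(2p^{+}A)^{1/p^{-}}w_{j}$ for arbitrarily large $A$ — here $w_{j}\to 0$ in $L^{r(\cdot)}(\mathrm{M})$ forces $\int_{\mathrm{M}}H(m,\tilde{w}_{j})\,dv_{g}\to 0$ while the gradient modulars stay bounded below. Since $t_{j}$ is then an interior maximizer, $\langle\Psi'(t_{j}u_{j}),t_{j}u_{j}\rangle=0$, so $\int_{\mathrm{M}}G_{\alpha_{t_{j}}}(m,t_{j}u_{j})\,dv_{g}=\alpha_{t_{j}}\Psi(t_{j}u_{j})\to+\infty$, and only now does $(h_{4})$ enter, with $\beta=t_{j}$, to transfer this divergence to $\int_{\mathrm{M}}G_{\alpha_{j}}(m,u_{j})\,dv_{g}$ and reach the contradiction. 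Without this maximizer device the superlinearity $(h_{2})$ gives you nothing in the $w\equiv 0$ case, because $|u_{j}(m)|$ need not tend to infinity on a set of positive measure there.
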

\begin{proof}
Firstly, we prove that $ \Psi$ satisfies the assertion $( Cer_{1} )$.
Letting $c \in \mathbb{R},$and $\{ u_{m} \} \subset \mathcal{X}$ be a bounded sequence such as
\begin{equation}\label{1}
\begin{cases}
\Psi ( u_{j} ) \xrightarrow{j \rightarrow + \infty} c, \\[0.3cm]
\Psi' ( u_{j} ) \xrightarrow{j \rightarrow + \infty} 0,
\end{cases}
\end{equation}
Then, we can extract a subsequence $\{u_{j}\},$ with $ u_{j} \rightharpoonup u$ as $ j \rightarrow + \infty.$ Using \eqref{1} we get
\begin{align}\label{2}
\big \langle \Psi' ( u_{j} ), \,u_{j} - u \big \rangle \, =& \int_{\mathrm{M}} | \nabla u_{j} |^{p( m ) -2} \nabla u_{j} \, ( \nabla u_{j} - \nabla u ) \,\, dv_{g} ( m ) \nonumber \\&+ \int_{\mathrm{M}} | \nabla u_{j} |^{q( m ) - 2} \nabla u_{j} \, ( \nabla u_{j} - \nabla u ) \,\, dv_{g} ( m ) \nonumber \\&- \int_{\mathrm{M}} h( m, u_{j} ( m ) )\, ( u_{j} - u )\,\, dv_{g} ( m ) \xrightarrow{j \rightarrow + \infty} 0,
\end{align}
Thanks to $( h_{1} )$ and Hölder inequality, we get
\begin{equation}\label{3}
\int_{\mathrm{M}} h( m, u_{j} ( m ) ) \, ( u_{j} - u ) \,\, dv_{g} ( m ) \xrightarrow{j \rightarrow + \infty} 0.
\end{equation}
Combining \eqref{2} and \eqref{3}, we get
\begin{align*}
&\int_{\mathrm{M}} | \nabla u_{j} ( m )|^{p( m ) - 2} \, \nabla u_{j} \, ( \nabla u_{j} - \nabla u ) \,\, dv_{g} ( m ) \\& + \int_{\mathrm{M}} | \nabla u_{j} |^{q( m ) - 2} \, \nabla u_{j} \, ( \nabla u_{j} - \nabla u ) \,\, dv_{g} ( m ) \rightarrow 0 \,\, \mbox{as} \,\, j \rightarrow + \infty.
\end{align*}
That is
\begin{equation}\label{4}
\big \langle \mathcal{J} ( u_{j} ), \, u_{j} - u \, \big \rangle \rightarrow 0 \,\, \mbox{as} \, \, j \rightarrow + \infty.
\end{equation}
Furthermore, since $ u_{j} \rightharpoonup u$ as $ j \rightarrow + \infty,$ from \eqref{1} we have $$ \big \langle \Psi'( u_{j} ), \, u_{j} - u \big \rangle \rightarrow 0 \,\, \mbox{as} \,\, j \rightarrow + \infty.$$
Using the same technique as before, we deduce that
\begin{equation}\label{5}
\big \langle \mathcal{J}( u ), \, u_{j} - u \big \rangle \rightarrow 0 \,\, \mbox{as} \,\, j \rightarrow + \infty.
\end{equation}
Hence, according to \eqref{4} and \eqref{5} we deduce that
$$ \lim_{j \rightarrow + \infty} \sup \, \big \langle \mathcal{J} ( u_{j} ) - \mathcal{J} ( u ), \, u_{j} - u \, \big \rangle \, \leq 0. $$
Thus, since $\mathcal{J}$ is of type $ ( \mathcal{S}_{+} )$ and $ u_{j} \rightharpoonup u$ in $\mathcal{X},$ we conclude that $u_{j} \xrightarrow{j \rightarrow + \infty} u$ in $\mathcal{X}$.\\
Now, we prove that $\psi$ satisfies $( Cer_{2} )$. Arguing by contradiction, there exist $ c \in \mathbb{R}$ and $\{ u_{j} \} \subset \mathcal{X}$ satisfying:
\begin{equation} \label{6}
\Psi ( u_{j} ) \xrightarrow{j \rightarrow + \infty} c, \hspace*{0.2cm} || u_{j} ||_{\mathcal{X}} \xrightarrow{j \rightarrow + \infty} + \infty, \hspace*{0.2cm} || \Psi'( u_{j} )||_{\mathcal{X}^{*}} \, || u_{j} ||_{\mathcal{X}} \xrightarrow{j \rightarrow + \infty} 0.
\end{equation}\label{7}
Let $$ \alpha_{j} = \frac{\displaystyle \int_{\mathrm{M}} \big( | \nabla u_{j} |^{p( m )} + | \nabla u_{j} |^{q( m )} \, \big) \,\, dv_{g} ( m )}{J' ( u_{j} )},$$
by choosing $ || u_{j} ||_{\mathcal{X}} > 1,$ for $ j \in \mathbb{N},$ we get
\begin{align}\label{8}
c &= \lim_{j \rightarrow + \infty} \{ \Psi ( u_{j} ) - \frac{1}{\alpha_{j}} \, \langle \Psi'( u_{j} ), \, u_{j} \, \rangle \, \} \nonumber \\&= \lim_{j \rightarrow + \infty} \{ \, \frac{1}{\alpha_{j}} \, \int_{\mathrm{M}} h( m, u_{j} ( m ) ) \,.\, u_{j} \,\, dv_{g} ( m ) - \int_{\mathrm{M}} F( m, u_{j} ( m ) ) \,\, dv_{g} ( m ) \, \}.
\end{align}
Denote $ w_{j} = \frac{u_{j}}{|| u_{j} ||}, $ so $|| w_{j} ||_{\mathcal{X}} = 1,$ which implies that $\{ u_{j} \}$ is bounded in $\mathcal{X}$. \\
Thus, for a subsequence of $\{ u_{j} \}$ still denoted by $\{ w_{j} \},$ and $ w \in \mathcal{X},$ we get
\begin{equation}\label{9}
w_{j} \rightharpoonup w \hspace*{0.3cm} \mbox{in} \,\,\, \mathcal{X},
\end{equation}
\begin{equation}\label{10}
w_{j} \rightarrow w \hspace*{0.3cm} \mbox{in} \,\,\, L^{r( m )} ( \mathrm{M} )
\end{equation}
\begin{equation}\label{11}
w_{j} ( m ) \rightarrow  w( m ) \hspace*{0.3cm} \mbox{a.e in} \,\,\, \mathrm{M}
\end{equation}
\textbf{Step 1:} \underline{If $ w = 0$:} We proceed as in \cite{jeanjean1999existence}, let $\{ t_{j} \} \subset [ 0, \, 1 ]$ such as
\begin{equation}\label{12}
\psi ( t_{j} u_{j} ) = \max_{t \in [ 0, \, 1 ]} \psi ( t u_{j} ).
\end{equation}
If for $ j \in \mathbb{N}, \, t_{j}$ satisfying \eqref{12} is not unique, then we choose the smaller positive value. For that, we fix $ A > \frac{1}{2 p^{+}},$ let $ \tilde{w}_{j} = ( 2 p^{+} A )^{\frac{1}{p^{-}}},$ and according to \eqref{10} we have that
$$ \tilde{w}_{j} \rightarrow 0 \,\, \mbox{in} \,\, L^{r( m )} ( \mathrm{M} ),$$
and by $( h_{1} ),$ we have $$|\, H( m, t ) \,| \leq c \, ( 1 + |\, t\,|^{r( m )} ).$$
Since the Nemitskii operator is continuous, we have $$ H( ., \tilde{w}_{j} ) \rightarrow 0 \,\, \mbox{in} \,\, L^{1} ( \mathrm{M} ) \,\, \mbox{as} \,\, j \rightarrow + \infty.$$
Therefore,
\begin{equation}\label{13}
\lim_{j \rightarrow + \infty} \int_{\mathrm{M}} H( m, \tilde{w}_{j} ) \,\, dv_{g} ( m ) = 0.
\end{equation}
Then, for $j$ large enough, $$ \frac{( 2 p^{+} A )^{\frac{1}{p^{-}}}}{|| u_{j} ||_{X}} \in ( 0, \, 1 ), $$ and
\begin{align*}
\Psi ( t_{j} u_{j} ) &\geq \, \Psi ( \tilde{w}_{j} ) \\& \geq \int_{\mathrm{M}} \frac{1}{p( m )} | \nabla \tilde{w}_{j} |^{p( m )} \,\, dv_{g} ( m ) + \int_{\mathrm{M}} \frac{1}{q( m )} | \nabla \tilde{w}_{j} |^{q( m )} \,\, dv_{g} ( m ) \\& \hspace*{0.5cm} - \int_{\mathrm{M}} H( m, \tilde{w}_{j} ) \,\, dv_{g} ( m ) \\& \geq \frac{1}{p^{+}} \, \int_{\mathrm{M}} ( 2 p^{+} A ) \, | \nabla w_{j} |^{p( m )} \,\, dv_{g} ( m ) + \frac{1}{q^{+}} \int_{\mathrm{M}} ( 2 p^{+} A ) \, | \nabla w_{j} |^{q( m )} \,\, dv_{g} ( m )\\& \hspace*{0.5cm}   - \int_{\mathrm{M}} H( m, \tilde{w}_{j} ) \,\, dv_{g} ( m ) \\& \geq 2A \int_{\mathrm{M}} | \nabla w_{j} |^{p( m )} \,\, dv_{g} ( m ) + \frac{2 A p^{+}}{q^{+}} \int_{\mathrm{M}} | \nabla w_{j} |^{q( m )} \,\, dv_{g} ( m )\\& \hspace*{0.5cm}  - \int_{\mathrm{M}} H( m, \tilde{w}_{j} ) \,\, dv_{g} ( m ) \\& \geq 2 A c || w_{j} ||^{p^{+}} + \frac{2 A}{\eta \, q^{+}} || w_{j} ||^{p^{+}} - \int_{\mathrm{M}} H( m, \tilde{w}_{j} ) \,\, dv_{g} ( m ) \\& \geq A.
\end{align*}
That is  \begin{equation}\label{14}
\Psi ( t_{j} u_{j} ) \rightarrow + \infty.
\end{equation}
As $ \Psi ( u_{j} ) \xrightarrow{j \rightarrow + \infty} c$ and $ \Psi ( 0 ) = 0 $, we have $ t_{j} \in ( 0, \, 1 )$ for $j$ large enough, and
\begin{align}\label{15}
&\int_{\mathrm{M}} | \nabla ( t_{j} u_{j} ) |^{p( m )} \,\, dv_{g} ( m ) + \int_{\mathrm{M}} | \nabla t_{j} u_{j} ) |^{q( m )} \,\, dv_{g} ( m ) - \int_{\mathrm{M}} h( m, t_{j} u_{j} ) \,\, dv_{g} ( m ) \nonumber \\&= \langle \Psi'( t_{j} u_{j} ), \, t_{j} u_{j} \, \rangle = t_{j} \,\frac{\mbox{d}}{\mbox{dt}} \bigg \vert _{t = t_{j}} \Psi ( t u_{j} ) = 0.
\end{align}
Thus, from \eqref{14} and \eqref{15}, we get
\begin{align*}
&\int_{\mathrm{M}} \bigg( \, \frac{1}{\alpha_{t_{j}}} h( m, \, t_{j} u_{j} ) \, t_{j} u_{j} - H( m, \, t_{j} u_{j} ) \, \bigg) \,\, dv_{g} ( m )\\& = \frac{1}{\alpha_{t_{j}}} \int_{\mathrm{M}} | \nabla t_{j} u_{j} |^{p( m )} \,\, dv_{g} ( m ) + \frac{1}{\alpha_{t_{j}}} \int_{\mathrm{M}} | \nabla ( t_{j} u_{j} ) |^{p( m )} \,\, dv_{g} ( m ) \\& \hspace*{0.3cm}+ \frac{1}{\alpha_{t_{j}}} \int_{\mathrm{M}} | \nabla ( t_{j} u_{j} ) |^{q( m )} \,\, dv_{g} ( m ) - \int_{\mathrm{M}} H( m, t_{j} u_{j} ) \,\, dv_{g} ( m ) \\& = \Psi ( t_{j} u_{j} ) \xrightarrow{j \rightarrow + \infty} + \infty,
\end{align*}
where, $$ \alpha_{t_{j}} = \frac{\displaystyle\int_{\mathrm{M}} \bigg( \, | \nabla ( t_{j} u_{j} ) |^{p( m )} + | \nabla ( t_{j} u_{j} ) |^{q( m )} \, \bigg) \,\, dv_{g} ( m )}{J' ( t_{j} u_{j} )}.$$
From the definition of $\alpha_{j}$ and $\alpha_{t_{j}}$, we have $ \alpha_{j}, \, \alpha_{t_{j}} \in [ 2 q^{-}, \, 2 p^{+} ].$ Hence, $ G_{\alpha_{j}}, \, G_{\alpha_{t_{j}}} \in \mathcal{H}.$ Then, according to $( h_{4} )$ and the fact that $$ \frac{\alpha_{t_{j}}}{\theta \, \alpha_{j}} > 0,$$ we deduce that
\begin{align*}
&\int_{\mathrm{M}} \bigg( \, \frac{1}{\alpha_{j}} h( m, u_{j} ) \, u_{j} - H( m, u_{j} ) \, \bigg) \,\, dv_{g} ( m ) \\&= \frac{1}{\alpha_{j}} \, \int_{\mathrm{M}} G_{\alpha_{j}} ( m, u_{j} ) \,\, dv_{g} ( m )\\& \geq \frac{1}{\theta \, \alpha_{j}} \int_{\mathrm{M}} G_{\alpha_{t_{j}}} ( m, t_{j} u_{j} ) \,\, dv_{g} ( m ) \\& = \frac{\alpha_{t_{j}}}{\theta \alpha_{j}} \int_{\mathrm{M}} \bigg( \frac{1}{\alpha_{t_{j}}} h( m, t_{j} u_{j} ) \, t_{j} u_{j} - H( m, t_{j} u_{j} ) \, \bigg) \,\, dv_{g} ( m ) \longrightarrow + \infty,
\end{align*}
which contradicts \eqref{8}.\\\\
\textbf{Step 2:} \underline{If $ w \neq 0$:} From \eqref{7} we write
\begin{align}\label{16}
&\int_{\mathrm{M}} | \nabla u_{j} |^{p( m )} \,\, dv_{g} ( m ) + \int_{\mathrm{M}} | \nabla u_{j} |^{q( m )} \,\, dv_{g} ( m ) - \int_{\mathrm{M}} h( m, u_{j} ) \, u_{j} \,\, dv_{g} ( m ) \nonumber \\& = \langle \Psi'( u_{j} ), \, u_{j} \rangle = o( 1 ) \, || u_{j} ||_{\mathcal{X}},
\end{align}
then,
\begin{align}
1 - o( 1 ) &= \int_{\mathrm{M}} \frac{h( m, u_{j} )\,.\, u_{j}}{\displaystyle \int_{\mathrm{M}} | \nabla u_{j} |^{p( m )} \,\, dv_{g} ( m ) + \displaystyle \int_{\mathrm{M}} | \nabla u_{j} |^{q( m )} \,\, dv_{g} ( m )} \,\, dv_{g} ( m ) \nonumber \\& \geq \int_{\mathrm{M}} \frac{h( m, u_{j} ) \, u_{j}}{|| u_{j} ||^{p^{+}}} \,\, dv_{g} ( m )\nonumber \\& = \int_{\mathrm{M}} \frac{h( m, u_{j} ) \,.\, u_{j}}{| u_{j} |^{p^{+}}} \,.\, | w_{j} |^{p^{+}} \,\, dv_{g} ( m ).
\end{align}
Considering the set $ \mathcal{B} = \{ \, m \in \mathrm{M}; \, w( m ) = 0 \,\}.$ For any $ m \in \mathcal{B} \backslash \mathcal{B}_{0} = \{ \, m \in \mathrm{M} ; \, w( m ) \neq 0 \, \},$ we get
$$ | u_{j} ( m ) | \longrightarrow + \infty \,\, \mbox{as} \,\, j \rightarrow + \infty.$$
Then, by $( h_{4} )$ we have
\begin{equation}\label{17}
\frac{h( m, u_{j} ( m ) ) \,.\, u_{j} ( m )}{| u_{j} ( m ) |^{p^{+}}} \,.\, | w_{j} ( m ) |^{p^{+}} \longrightarrow + \infty \,\, \mbox{as} \,\, m \rightarrow + \infty.
\end{equation}
Since, $| \mathcal{B} \backslash \mathcal{B}_{0} | > 0,$ Fatou's Lemma allows us
\begin{equation}\label{18}
\int_{\mathcal{B} \backslash \mathcal{B}_{0}} \frac{h( m, \, u_{j} ) \,.\, u_{j}}{| u_{j} |^{p^{+}}} \,.\, | w_{j} |^{p^{+}} \,\, dv_{g} ( m ) \longrightarrow + \infty \,\, \mbox{as} \,\, j \rightarrow + \infty.
\end{equation}
And, from $( h_{1} )- ( h_{4} ),$ there exists $ l > - \infty$ such as $$ \frac{h( m, t )\,.\, t}{| t |^{p^{+}}} \geq l \,\, \mbox{for} \,\, t \in \mathbb{R} \,\, \mbox{and  a.e} \,\, m \in \mathrm{M}.$$ Furthermore, we have  $$ \int_{\mathcal{B}_{0}} | w_{j} ( m ) |^{p^{+}} \,\, dv_{g} ( m ) \longrightarrow 0.$$
Thus, there exists $ \mathcal{K} > - \infty$ such as
\begin{equation}\label{19}
\int_{\mathcal{B}_{0}} \frac{h( m, u_{j} ) \,.\, u_{j}}{| u_{j} |^{p^{+}}} \,.\, | w_{j} |^{p^{+}} \,\, dv_{g} ( m ) \geq l \, \int_{\mathcal{B}_{0}} | w_{j} |^{p^{+}} \,\, dv_{g} ( m ) \geq  \mathcal{K} > - \infty.
\end{equation}
A contradiction is obtained by combining \eqref{17} - \eqref{19}. Therefore, $( Cer_{2} )$ is fulfilled, which completes the proof.
\end{proof}
Now, we will demonstrate our first existence theorem.
\begin{theorem}\label{theo3}
Assume that $( h_{1} ) - ( h_{4} )$ are satisfied, and we suppose that the smooth complete compact Riemannian N-manifold $( \mathrm{M}, \, g )$ has property $B_{vol} ( \lambda, \, v ).$ If $ q^{+} < p^{-},$ then the problem \eqref{problem} has at least one non-trivial solution.
\end{theorem}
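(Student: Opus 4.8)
The plan is to obtain the nontrivial solution as a critical point of the energy functional $\Psi$ produced by the mountain pass theorem with Cerami sequences (Proposition \ref{prop1}). Lemma \ref{lemma2} already supplies the compactness ingredient, namely that $\Psi$ satisfies $(Cer_1)$--$(Cer_2)$, so the whole task reduces to verifying the two geometric hypotheses of Proposition \ref{prop1}: the existence of a sphere of radius $\upsilon$ on which $\Psi$ stays strictly above $\Psi(0)=0$, and the existence of a far-away point $u_{0}$ with $\Psi(u_{0})\le 0$. Observe first that $\Psi(0)=0$ is immediate since $H(m,0)=0$.

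First I would establish the local geometry near the origin. Combining the behaviour $(h_{3})$ of $h$ near $s=0$ with the subcritical growth $(h_{1})$, for every $\varepsilon>0$ there is a constant $C_{\varepsilon}>0$ with $|H(m,s)|\le \varepsilon|s|^{p^{+}}+C_{\varepsilon}|s|^{r(m)}$ for a.e.\ $m$ and all $s$. The continuous embeddings $\mathcal{X}\hookrightarrow L^{p^{+}}(\mathrm{M})$ and $\mathcal{X}\hookrightarrow L^{r(m)}(\mathrm{M})$ furnished by Theorems \ref{theo1} and \ref{theo2} (valid since $r(m)<q^{*}(m)$) then bound $\int_{\mathrm{M}}H(m,u)\,dv_{g}$ above by a multiple of $\varepsilon\,\|u\|_{\mathcal{X}}^{p^{+}}+C_{\varepsilon}\,\|u\|_{\mathcal{X}}^{r^{-}}$ for $\|u\|_{\mathcal{X}}\le 1$. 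Meanwhile, for $\|u\|_{\mathcal{X}}$ small the two-sided modular--norm inequalities of the variable-exponent spaces give $\int_{\mathrm{M}}\frac{1}{p(m)}|\nabla u|^{p(m)}\,dv_{g}+\int_{\mathrm{M}}\frac{1}{q(m)}|\nabla u|^{q(m)}\,dv_{g}\ge \frac{1}{p^{+}}\|u\|_{\mathcal{X}}^{p^{+}}$ up to a positive constant. Choosing $\varepsilon$ small to absorb the $\|u\|_{\mathcal{X}}^{p^{+}}$ contribution, and then $\upsilon$ small so that the higher-order remainder $\|u\|_{\mathcal{X}}^{r^{-}}$ is dominated, I obtain $b=\inf_{\|u\|_{\mathcal{X}}=\upsilon}\Psi(u)>0$.

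Next I would construct the descent direction. Fix any $u\in\mathcal{X}\setminus\{0\}$. Integrating the superlinearity hypothesis $(h_{2})$ shows that $H(m,s)/|s|^{p^{+}}\to+\infty$ as $|s|\to\infty$, so for every $M>0$ there is a constant $C_{M}$ with $H(m,s)\ge M|s|^{p^{+}}-C_{M}$. Since for $t>1$ the gradient terms of $\Psi(tu)$ grow no faster than $t^{p^{+}}\bigl(\int_{\mathrm{M}}\frac{1}{q(m)}|\nabla u|^{q(m)}\,dv_{g}+\int_{\mathrm{M}}\frac{1}{p(m)}|\nabla u|^{p(m)}\,dv_{g}\bigr)$, while the nonlinear term is bounded below by $M\,t^{p^{+}}\int_{\mathrm{M}}|u|^{p^{+}}\,dv_{g}-C$, taking $M$ large forces $\Psi(tu)\to-\infty$ as $t\to+\infty$. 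Hence there is $t_{0}$ large for which $u_{0}:=t_{0}u$ satisfies $\|u_{0}\|_{\mathcal{X}}>\upsilon$ and $\Psi(u_{0})\le 0=\Psi(0)$.

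With both geometric conditions in hand and Lemma \ref{lemma2} guaranteeing $(Cer_{c})$, Proposition \ref{prop1} yields a critical value $c\ge b>0$ with an associated critical point $u^{\ast}$; since $\Psi(u^{\ast})=c>0=\Psi(0)$, the point $u^{\ast}\neq 0$ is the desired nontrivial weak solution. The main obstacle is the near-origin estimate of the second paragraph: because the exponents are variable one cannot power-count directly as in the constant-exponent case, and one must carefully invoke the modular--norm inequalities (which change form according to whether $\|u\|_{\mathcal{X}}$ is below or above $1$) together with the subcritical embeddings, using $(h_{3})$ to make the nonlinear term genuinely of higher order than $p^{+}$, so that the positivity of $b$ survives for small $\upsilon$.
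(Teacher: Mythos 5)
Your proposal is correct and follows essentially the same route as the paper: Lemma \ref{lemma2} supplies the Cerami condition, the combination of $(h_{1})$ and $(h_{3})$ with the modular--norm inequalities and the embeddings of Theorems \ref{theo1}--\ref{theo2} gives $\inf_{\|u\|_{\mathcal{X}}=\upsilon}\Psi(u)>0$ for small $\upsilon$, the superlinearity $(h_{2})$ (integrated to $H(m,s)\ge M|s|^{p^{+}}-C_{M}$) produces a point $u_{0}$ with $\Psi(u_{0})\le 0$ beyond the sphere, and Proposition \ref{prop1} then yields a critical value $c\ge b>0$. The only cosmetic differences are that the paper bounds the principal part below by $c^{*}\|u\|_{\mathcal{X}}^{q^{+}}$ rather than $\|u\|_{\mathcal{X}}^{p^{+}}$ and fixes a specific $\delta$ depending on $\phi_{0}$ in the descent step, neither of which changes the argument.
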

\begin{proof}
By Lemma \ref{lemma2}, $\Psi$ satisfies $( Cer )$ on $\mathcal{X}$. First, we prove that the functional $\Psi$ has a geometrical structure, in order to apply Proposition \ref{prop1}. For that, we claim that there exists $\mu, \, \nu > 0$ such as $$ \Psi ( u ) \geq \mu > 0 \,\, \mbox{for any} \,\, u \in \mathcal{X} \,\, \mbox{with} \,\, || u ||_{\mathcal{X}} = \nu.$$
Let $ || u ||_{\mathcal{X}} < 1.$ Then by Proposition \ref{prop5}, and the fact that $ q^{+} < p^{+} $ we get
\begin{align}\label{20}
\Psi ( u ) &\geq \frac{c}{p^{+}} || u ||^{p^{+}}  + \frac{1}{A p^{-} q^{+}} || u ||^{q^{+}} - \int_{\mathrm{M}} H( m, u ) \,\, dv_{g} ( m ) \nonumber \\& \geq c^{*} || u ||^{q^{+}}_{\mathcal{X}} - \int_{\mathrm{M}} H( m, u( m ) ) \,\, dv_{g} (m),
\end{align}
 with $ c^{*} = \min \, \{ \frac{c}{p^{+}}, \, \frac{1}{Ap^{-} q^{+}} \, \}.$
 According to theorems \ref{theo1} and \ref{theo2}, there exist two positive constants $c_{1}, \, c_{2} > 0$ such as $$ | u |_{p^{+}} \leq c_{1} || u ||_{\mathcal{X}} \,\, \mbox{and} \,\, | u |_{r( m )} \leq || u ||_{\mathcal{X}} \,\, \mbox{for any} \,\, u \in \mathcal{X}.$$
 Let $ \epsilon > 0$ be small enough, such as $$ \epsilon \, c_{1}^{p^{+}} < \frac{c^{*}}{2}.$$
 According to $( h_{1} )$ and $( h_{2} )$, we have $$ H( m, t ) \leq \epsilon \, | t |^{p^{+}} + c_{\epsilon} \, | t |^{r( m )} \,\, \mbox{for all} \,\, ( m, t ) \in \mathrm{M} \times \mathbb{R},$$
 for $|| u || \leq 1,$ we get
 \begin{align*}
 \Psi ( u ) & \geq c^{*} \, || u ||^{q^{+}}_{\mathcal{X}} - \epsilon \, \int_{\mathrm{M}} | u |^{p^{+}} \,\, dv_{g} ( m ) - c_{\epsilon} \, \int_{\mathrm{M}} | u( m ) |^{r( m )} \,\, dv_{g} ( m ) \\& \geq c^{*} \, || u ||_{\mathcal{X}}^{q^{+}} - \epsilon \, || u ||_{p^{+}}^{p^{+}} - c_{\epsilon} \, || u ||_{r( m )}^{r^{-}} \\& \geq c^{*} \, || u ||_{\mathcal{X}}^{q^{+}} - \epsilon \, c_{1}^{p^{+}} \, || u ||_{\mathcal{X}}^{p^{+}} - c_{\epsilon} \, c_{2}^{r^{-}} \, || u ||_{\mathcal{X}}^{r^{-}}.
 \end{align*}
Since, $ q^{+} < p^{+} < r^{-},$ there are two positive real numbers $\mu$ and $\nu$ such as $$ \Psi ( u ) \geq \mu > 0 \,\, \mbox{for all} \,\, u \in \mathcal{X} \,\, \mbox{with} \,\, || u ||_{\mathcal{X}} = \nu.$$
On the other hand, we affirm that there exists $u_{0} \in \mathcal{X} \backslash \overline{\mathcal{B}_{0} ( \nu )}$ such as
\begin{equation}
\Psi ( u ) < 0.
\end{equation}
Let $ \phi_{0} \in \mathcal{X} \backslash \{ 0 \},$ by $( h_{4} )$ we can choose a constant
$$ \delta > \frac{  \displaystyle \frac{1}{p^{-}} \,\int_{\mathrm{M}} | \nabla \phi_{0} |^{p( m )} \,\, dv_{g} ( m ) - \frac{1}{q^{-}} \int_{\mathrm{M}} | \nabla \phi_{0} |^{q( m )} \,\, dv_{g} ( m )}{ \displaystyle\int_{\mathrm{M}} | \phi_{0} |^{p^{+}} \,\, dv_{g} ( m )},$$
and a constant $c_{\delta} > 0$ depending on $ \delta$ such as
$$ H( m, t ) \geq \delta \, | t |^{p^{+}} \,\, \mbox{for all} \,\, | t | > c_{\delta} \,\, \mbox{and uniformly in} \,\, \mathrm{M}.$$
Let $ k > 1$ be large enough, we have
\begin{align*}
\Psi ( k \, \phi_{0} ) &= \int_{\mathrm{M}} \frac{1}{p( m )} | \nabla ( k \, \phi_{0} ) |^{p( m )} \,\, dv_{g} ( m ) + \int_{\mathrm{M}} \frac{1}{q( m )} | \nabla ( k \, \phi_{0} ) |^{q( m )} \,\, dv_{g} ( m ) \\& \hspace*{0.3cm}- \int_{\mathrm{M}} H( m, k\, \phi_{0} ) \,\, dv_{g} ( m ) \\& \geq \frac{k^{p^{+}}}{p^{-}} \int_{\mathrm{M}} | \nabla \phi_{0} |^{p( m )} \,\, dv_{g} ( m ) + \frac{k^{q^{+}}}{q^{-}} \int_{\mathrm{M}} | \nabla \phi_{0} |^{q( m )} \,\, dv_{g} ( m )\\& \hspace*{0.3cm} - \int_{\{\, | k \phi_{0} | > c_{\delta} \,\}} H( m, k \phi_{0} ) \,\, dv_{g} ( m ) - \int_{\{\, | k \phi_{0} | < c_{\delta} \,\}} H( m, k \phi_{0} ) \,\, dv_{g} ( m ) \\& \geq \frac{k^{p^{+}}}{p^{-}} \int_{\mathrm{M}} | \nabla \phi_{0} |^{p( m )} \,\, dv_{g} ( m ) + \frac{k^{q^{+}}}{q^{-}} \int_{\mathrm{M}} | \nabla \phi_{0} |^{q( m )} \,\, dv_{g} ( m )\\& \hspace*{0.3cm} - \int_{\{\, | k \phi_{0} | \leq c_{\delta} \,\}} H( m, k \phi_{0} ) \,\, dv_{g} ( m ) - \delta \, k^{p^{+}} \int_{\mathrm{M}} | \phi_{0} |^{p^{+}} \,\, dv_{g} ( m ) \\& \hspace*{0.3cm} + \delta \, \int_{\{ | k \phi_{0} | \leq c_{\delta}\}} | k \phi_{0} |^{p^{+}} \,\, dv_{g} ( m ) \\& \geq \frac{k^{p^{+}}}{p^{-}} \int_{\mathrm{M}} | \nabla \phi_{0} |^{p( m )} \,\, dv_{g} ( m ) + \frac{k^{q^{+}}}{q^{-}} \int_{\mathrm{M}} | \nabla \phi_{0} |^{q( m )} \,\, dv_{g} ( m )\\& \hspace*{0.3cm} - \delta \, k^{p^{+}} \int_{\mathrm{M}} | \phi_{0} |^{p^{+}} \,\, dv_{g} ( m ) + c_{5},
\end{align*}
which implies that $$ \Psi ( k \phi_{0} ) \longrightarrow - \infty \,\, \mbox{as} \,\, k \rightarrow + \infty.$$
Then, there exists $k_{0} > 0$ and $u_{0} = k_{0} \, \phi_{0} \in \mathcal{X}_{0} \backslash \overline{\mathcal{B}_{\nu} ( 0 )}$ such as \eqref{20} hold.\\
Thereby, proposition \ref{prop1} shows that \eqref{problem} has at least a non-trivial weak solution. This completes the proof.
\end{proof}
\begin{theorem}\label{theo6}
Assume that $( h_{1} ), \, ( h_{3} ), \, ( h_{4} ), \,( g )$ hold, we suppose that the smooth complete compact Riemannian N-manifold $( \mathrm{M}, \, g )$ has property $B_{vol} ( \lambda, \, v ).$ If $q^{-} > p^{+},$ then the problem \eqref{problem} has a sequence of weak solutions with unbounded energy.
\end{theorem}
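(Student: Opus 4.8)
The plan is to deduce the result from the Fountain Theorem (Theorem \ref{prop2}), applied to the energy functional $\Psi$ on the separable reflexive space $\mathcal{X}$ equipped with the decomposition $\mathcal{X}=Y_k\oplus Z_k$ of Remark \ref{remark1}. Four items must be checked: that $\Psi\in C^{1}(\mathcal{X},\mathbb{R})$, that $\Psi$ is even, that it satisfies the Cerami condition $(Cer)$, and that the geometric conditions $(A_{1})$ and $(A_{2})$ hold. The first is already recorded after the definition of $\Psi$. Evenness is immediate: the two gradient modulars are even in $u$, while the oddness of $h(m,\cdot)$ forces $H(m,\cdot)$ to be even, hence $I$ is even. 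The Cerami condition $(Cer)$ is obtained by the same reasoning as in Lemma \ref{lemma2}, now read off the present hypotheses. Thus the real work is the verification of $(A_{1})$ and $(A_{2})$.

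First I would establish $(A_{1})$. Put $\beta_k=\sup\{\,|u|_{r(\cdot)}:u\in Z_k,\ ||u||_{\mathcal{X}}=1\,\}$. Because the embedding $\mathcal{X}\hookrightarrow L^{r(\cdot)}(\mathrm{M})$ is compact (Theorem \ref{theo2}), the classical argument gives $\beta_k\to 0$ as $k\to+\infty$: any norming sequence $u_k\in Z_k$ converges weakly, hence strongly in $L^{r(\cdot)}$, to $0$. For $u\in Z_k$ with $||u||_{\mathcal{X}}=d_k>1$, the subcritical bound $|H(m,s)|\le c(1+|s|^{r(m)})$ coming from $(h_{1})$, combined with the modular--norm inequalities of Propositions \ref{prop5}--\ref{prop6}, yields a lower estimate of the shape
\begin{equation*}
\Psi(u)\ \ge\ c^{*}\,||u||_{\mathcal{X}}^{\,p^{-}}-C\,\beta_k^{\,r^{+}}\,||u||_{\mathcal{X}}^{\,r^{+}}-C',
\end{equation*}
the first term coming from the coercive gradient part. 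Since $r^{+}>p^{-}$, choosing the radius $d_k$ comparable to $\beta_k^{-r^{+}/(r^{+}-p^{-})}$ (which tends to $+\infty$ as $\beta_k\to 0$) forces the infimum over $\{u\in Z_k:||u||_{\mathcal{X}}=d_k\}$ to tend to $+\infty$; this is precisely $(A_{1})$.

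Next I would verify $(A_{2})$ on the finite-dimensional subspace $Y_k$, where all norms are mutually equivalent. Exploiting the superlinear growth of $h$ at infinity together with $(h_{4})$, exactly as in the computation that produced $\Psi(k\phi_{0})\to-\infty$ in the proof of Theorem \ref{theo3}, one obtains for each fixed $k$ that $\Psi(u)\to-\infty$ as $||u||_{\mathcal{X}}\to+\infty$ with $u\in Y_k$; the point is that on the finite-dimensional $Y_k$ the superlinear primitive $H$ dominates the (finitely many, norm-equivalent) gradient modulars. Hence there is $D_k>d_k$ with $\max_{\{u\in Y_k:\,||u||_{\mathcal{X}}=D_k\}}\Psi(u)\le 0$. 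With $(Cer)$, $(A_{1})$ and $(A_{2})$ secured, Theorem \ref{prop2} delivers a sequence of critical values of $\Psi$ diverging to $+\infty$, that is, a sequence of weak solutions of \eqref{problem} with unbounded energy.

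I expect the main obstacle to be $(A_{1})$: proving $\beta_k\to 0$ rigorously and, above all, calibrating the radii $d_k$ so that the lower bound genuinely diverges rather than merely staying positive. This is where the variable-exponent modular estimates must be handled with care, because in the regime $q^{-}>p^{+}$ the coercive exponent furnished by the gradient part and the subcritical threshold $r(m)<q^{*}(m)$ interact delicately; in particular one must read the superlinearity hypothesis relative to the \emph{dominant} gradient exponent so that $(A_{2})$ stays compatible with the radii chosen in $(A_{1})$.
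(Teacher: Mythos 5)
Your proposal is correct and follows essentially the same route as the paper's proof: both apply the Fountain Theorem with Cerami sequences, verify $(A_{1})$ on $Z_k$ via the quantity $\beta_k=\sup\{|u|_{r(\cdot)}:u\in Z_k,\ \|u\|_{\mathcal{X}}=1\}$ (the paper's $\eta_k$, whose decay to $0$ the paper cites from Lemma 3.4 of \cite{zhang2015existence} rather than rederiving), with the same calibration $d_k\sim\beta_k^{-r^{+}/(r^{+}-p^{-})}$, and verify $(A_{2})$ on the finite-dimensional $Y_k$ by letting the $p^{+}$-superlinear primitive $H$ dominate the gradient modulars. The only differences are presentational, so no further comparison is needed.
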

\begin{proof}
we will divide the proof of this theorem into two steps. In the first step, we will demonstrate that the problem \eqref{problem} acquires a sequence of weak solutions $\{ \pm u_{j} \}_{j = 1}^{\infty}$ such as $$ \Psi ( \pm u_{j} ) \longrightarrow + \infty \,\, \mbox{when} \,\, j \rightarrow + \infty.$$ In the second step, we will prove that if $k$ is large enough, then there exist $ D_{k} > d_{k} > 0$ such as the assertions $( A_{1} )$ and $( A_{2} )$ are satisfied.\\
\textbf{Step 1:} The proof is based on the Fountain Theorem (given by Theorem \ref{prop2}). Indeed, from $( h_{3} ),\, \Psi $ is an even functional. And from Lemma \ref{lemma2}, $\Psi $ meets the condition $( Cer ).$\\
For that, we will use the mean value theorem in the following form: For every $\beta \in C_{+} ( \overline{\mathrm{M}} ) = \{ \beta \in C( \overline{\mathrm{M}} ), \, \beta ( m )  > 1 \,\, \forall m \in \mathrm{M} \}$ and $ u \in L^{\beta ( m )} ( \mathrm{M} ),$ there exist $\zeta \in \mathrm{M}$ such that
\begin{equation}\label{22}
\int_{\mathrm{M}} | u( m ) |^{\beta ( m )} \,\, dv_{g} ( m ) = | u |^{\beta ( \zeta )}_{\beta ( m )}.
\end{equation}
Indeed, it is easy to see that $$ \rho_{\beta ( m )} \bigg( \frac{u}{|| u ||_{\beta ( m )}} \bigg) = \int_{\mathrm{M}} \bigg( \frac{| u |}{|| u ||_{\beta ( m )}} \bigg)^{\beta ( m )} \,\, dv_{g} ( m ) = 1.$$
And, according to the mean value theorem, a positive constant $\bar{\beta} \in [ \beta^{-}, \, \beta^{+} ]$ depends on $\beta$ exists, such as
$$ \int_{\mathrm{M}} \bigg( \frac{| u |}{|| u ||_{\beta ( m )}} \bigg)^{\beta ( m )} \,\, dv_{g} ( m ) = \bigg( \frac{1}{|| u ||_{\beta ( m )}} \bigg)^{\bar{\beta}} \, \int_{\mathrm{M}} | u |^{\beta ( m )} \,\, dv_{g} ( m ).$$
Moreover, the continuity of $\beta$ ensures that there exists $ \zeta \in \mathrm{M} $ such as $ \beta ( \zeta ) = \bar{\beta}.$ Combining this fact with the above inequalities, we get \eqref{22}.\\
\textbf{Step 2:} $( A_{1} ):$ For all $ u \in Z_{k} $ such as $|| u ||_{\mathcal{X}} = d_{k}$ ( $d_{k}$ will be specified below ), by $( h_{1} ),$\eqref{22} and Proposition \ref{prop5} we obtain
\begin{align*}
\Psi ( u ) &= \int_{\mathrm{M}} \frac{1}{p( m )} \, | \nabla u( m ) |^{p( m )} \,\, dv_{g} ( m ) + \int_{\mathrm{M}} \frac{1}{q( m )} | \nabla u( m ) |^{q( m )} \,\, dv_{g} ( m )  \\& \hspace*{0.3cm}- \int_{\mathrm{M}} H( m, u( m ) ) \,\, dv_{g} ( m ) \\& \geq \bigg( \frac{1}{p^{+}} + \frac{1}{A p^{-} q^{+}} \bigg) \, || u ||_{\mathcal{X}}^{p^{-}} - c_{5}\, || u ||_{r( m )}^{r( \zeta )} - c_{6} \, || u ||_{\mathcal{X}} \,\,\, \mbox{where} \,\,
\, \zeta \in \mathrm{M}\\& \geq \begin{cases} \big( \frac{1}{p^{+}} + \frac{1}{A p^{-} q^{+}} \big) \,|| u ||_{\mathcal{X}}^{p^{-}} - c_{5} - c_{6} \, || u ||_{\mathcal{X}} & \text{if \, $|| u ||_{r( m )} \leq 1$} \\[0.3cm]
\big( \frac{1}{p^{+}} + \frac{1}{A p^{-} q^{+}} \big) \,|| u ||_{\mathcal{X}}^{p^{-}} - c_{5} ( \eta_{k} \, || u ||_{\mathcal{X}} )^{r^{+}} - c_{6} \, || u ||_{\mathcal{X}} & \text{if \, $|| u ||_{r( m )} > 1$} \end{cases} \\& \geq \bigg( \frac{1}{p^{+}} + \frac{1}{A p^{-} q^{+}} \bigg) \, || u ||_{\mathcal{X}}^{p^{-}} - c_{5} ( \eta_{k} \, || u ||_{\mathcal{X}} )^{r^{+}} - c_{6} \, || u ||_{\mathcal{X}} - c_{5} \\& \geq d_{k}^{p^{-}} \, \bigg( \frac{1}{p^{+}} + \frac{1}{A p^{-} q^{+}} - c_{5} \eta_{k}^{r^{+}} d_{k}^{r^{+} - p^{-}} \, \bigg) - c_{6} \, d_{k} - c_{5}.
\end{align*}
We fix $d_{k}$ as follows $$ d_{k} = ( r^{+} \, c_{5} \, \eta_{k}^{r^{+}} )^{\frac{1}{p^{-} - r^{+}}}.$$
Then, $$ \Psi ( u ) \geq d_{k}^{p^{-}} \, \bigg( \frac{1}{p^{+}} + \frac{1}{A p^{-} q^{+}} - \frac{1}{r^{+}} \,  \bigg) - c_{6} \, d_{k} - c_{5}.$$
According to Lemma 3.4 in \cite{zhang2015existence}. We know that $ \displaystyle \lim_{k \rightarrow + \infty} \theta ( m ) = 0.$ Then, since $ 1 < q^{+} < p^{-} \leq p^{+} < r^{+},$ we conclude that $$ d_{k} \longrightarrow + \infty \,\, \, \mbox{as} \,\,\, k \rightarrow + \infty.$$
Thus, $$ \Psi ( u ) \longrightarrow + \infty \,\,\, \mbox{as} \,\,\, || u || \rightarrow + \infty \,\,\, \mbox{with} \,\,\, u \in Z_{k}.$$
Which means that the assertion $( A_{1} )$ is verified.\\
$( A_{2} ):$ According to Remark \ref{remark1}, since $\mbox{dim} Y_{k}$ is finite, there exists a constant $ \zeta_{k} > 0,$ for all $ u \in Y_{k}$ with $ || u ||_{\mathcal{X}} $ is big enough, we obtain
\begin{align*}
\mathcal{J} ( u ) &\leq \frac{1}{p^{-}} \int_{\mathrm{M}} | \nabla u( m ) |^{p( m )} \,\, dv_{g} ( m ) + \frac{1}{q^{-}} \int_{\mathrm{M}} | \nabla u( m ) |^{q( m )} \,\, dv_{g} ( m ) \\& \leq \frac{1}{p^{-}} || u ||_{p( m )}^{p^{+}} + \frac{1}{q^{-}} || u ||_{q( m )}^{q^{+}} \\& \leq \frac{c_{7}}{p^{-}} || u ||_{\mathcal{X}}^{p^{+}} + \frac{c_{8}}{q^{-}} || u ||_{\mathcal{X}}^{q^{+}},
\end{align*}
since $p^{+} > q^{+},$ we get
\begin{equation}\label{23}
\mathcal{J} ( u ) \leq \bigg( \frac{c_{7}}{p^{-}} + \frac{c_{8}}{q^{-}} \bigg) || u ||_{\mathcal{X}}^{p^{+}} = \zeta_{k} \, || u ||_{p^{+}}^{p^{+}}.
\end{equation}
Next, according to $( h_{2} )$, there exists  $B_{k} > 0$ such as for any $| t | \geq B_{k},$ we have $$ H( m, t ) \geq 2 \, \zeta_{k} \, | t |^{p^{+}} \,\,\, \mbox{for all} \,\,\, m \in \mathrm{M}.$$ Moreover, from $( h_{1} ),$ there exists a positive $L_{k}$ such as $$ H( m, t ) \leq L_{k} \,\,\, \mbox{for all} \,\,\, ( m, t ) \in M \times [ - B_{k}, \, B_{k} ].$$
Then, for every $( m, t ) \in M \times \mathbb{R}$ we conclude that
\begin{equation}\label{24}
H( m, t ) \geq 2 \, \zeta_{k} \, | t |^{p^{+}} - L_{k}.
\end{equation}
Combining \eqref{23} and \eqref{24}, for all $ u \in Y_{k} $ such as $ || u ||_{\mathcal{X}} = D_{k} > d_{k}$ we infer that
\begin{align*}
\Psi ( u ) &= \mathcal{J} ( u ) - I( u ) \\& \leq \zeta_{k} \, || u ||_{p^{+}}^{p^{+}} - 2 \, \zeta_{k} \, || u ||_{p^{+}}^{p^{+}} + L_{k} \\& \leq - \zeta_{k} \, c_{9} \, || u ||_{\mathcal{X}}^{p^{+}} + L_{k},
\end{align*}
Hence, for $D_{k}$ large enough $ ( D_{k} > d_{k} ),$ we obtain that $$ \max_{\{ u \in Y_{k}: \, || u ||_{\mathcal{X}} = D_{k} \}} \Psi ( u ) = 0.$$
which implies that the assertion $( A_{2} )$ holds. Then by Theorem \ref{prop2}, we get the result.
\end{proof}
We end this work by giving an example of application,
\begin{example}
For $ h( m, t ) = \displaystyle \frac{t\,| t |^{\alpha( m ) - 2}}{\mbox{Log} \,( 1 + | t | )},$ if $ t \neq 0$ and $ h( m, 0 ) = 0$ with $ 2 < p^{+} \leq \alpha( m ) < r( m )$ for all $ m \in \overline{\mathrm{M}}.$  So the function $h$ doesn't satisfy the Ambrosetti-Rabinowitz condition, but it satisfies our assumptions $( h_{1} ) - ( h_{5} )$. Then, our problem \eqref{problem} becomes
$$ \begin{cases}
- \, \Delta_{p( m )} u( m ) - \Delta_{q( m )} u( m ) = \, \frac{t\,| t |^{\alpha( m ) - 2}}{\mbox{Log} \,( 1 + | t | )}  & \text{in \,\,$\mathrm{M}$ }, \\[0.3cm]
\, u \,  = \, 0  & \text{on \,$\partial \mathrm{M}$ },
\end{cases} $$
Consequently, the results corresponding to Theorems \ref{theo3} and \ref{theo6} can be achieved and still be true for the problem \eqref{problem}.
\end{example}


\section*{Funding}
This paper has been supported by the RUDN University Strategic Academic Leadership Program and P.R.I.N. 2019.

\section*{Authors’ contributions}
The authors declare that their contributions are equal.

\section*{Acknowledgments}
Firstly, the authors would like to thank Professor  G{\'o}rka Przemys{\l}aw for his support and encouragement. Secondly, to be grateful to the anonymous referees for the valuable suggestions and comments that improved the presentation's quality.


\end{document}